\newtheorem{theorem}{Theorem}[section]
\newtheorem{definition}[theorem]{Definition}
\newtheorem{lemma}[theorem]{Lemma}
\newtheorem{proposition}[theorem]{Proposition}
\newenvironment{proof}[1][Proof]{\textbf{#1.} }{\ \rule{0.5em}{0.5em}}
\newcommand{\refeqn}[1]{(\ref{#1})}
\newcommand{\cinf}[0]{C^{\infty}}
\newcommand{\matr}[0]{\operatorname{Mat}}
\newcommand{\spann}[0]{\operatorname{span}}
\newcommand{\Iso}[0]{\operatorname{Iso}}
\begin{document}

\title{{\bf Symmetries of Stochastic Differential Equations: a geometric approach}}
\author{ Francesco C. De Vecchi*, Paola Morando** \\
and Stefania Ugolini*\\
*Dip. di Matematica, Universit\`a degli Studi di Milano, \\
via Saldini 50, Milano;\\
**DISAA, Universit\`a degli Studi di Milano, \\
via Celoria 2, Milano;\\
francesco.devecchi@unimi.it, paola.morando@unimi.it, \\
stefania.ugolini@unimi.it }
\date{}

\maketitle
\begin{abstract}
A new notion of stochastic transformation  is proposed and applied to the study of both weak and strong  symmetries of stochastic differential equations (SDEs). The correspondence between an algebra of weak symmetries  for a given SDE and  an algebra of strong  symmetries for a modified SDE is proved under suitable regularity assumptions.
This general approach is applied to a stochastic version of a two dimensional  symmetric ordinary differential equation and to the case of two dimensional Brownian motion.
\end{abstract}

\textbf{Keywords}: Stochastic Differential Equations, Symmetry

\textbf{MSC numbers}: 60H10, 58D19

\section{Introduction}
The study of  symmetry properties of  ordinary (ODEs) and partial differential equations (PDEs) is a classical and well-established topic  (see \cite{Bluman-Kumei,Gaeta,Olver,Olver2,Stephani})  and provides a powerful tool for both the explicit computation of  solutions to the equations and a better understanding of their qualitative behavior. On the other hand an analogous theory of  symmetries of stochastic differential equations (SDEs) or, more generally, of diffusion processes has been developed only in recent years. In the literature we can distinguish three different approaches.
The first one goes back to 1995 and is due to Cohen De Lara (\cite{DeLara1}), who studied the symmetries of the infinitesimal generator of a diffusion process on a manifold, starting from pioneer works on symmetries of Markov processes (\cite{Glover1},\cite{Glover2}). A recent similar method is introduced in \cite{DeVecchi1}, where a geometric definition of  symmetry for diffusion processes is proposed, in the framework  of the second order geometry developed by Schwartz, Emery and  Meyer (\cite{ Schwartz},\cite{Emery}) and a geometric reformulation of the associated martingale problem is provided.\\
In the second approach  a symmetry of a SDE is defined as a suitable transformation preserving the solution to the  SDE, in complete analogy with the deterministic case  of ODEs by Gaeta (\cite{Gaeta1},\cite{Gaeta2}). A review of the results in this direction up to 2010 can be found in \cite{Grigoriev} (Chapter 5). Finally a very general and elegant approach for solving a SDE  via symmetries in a dynamical perspective, founded on quantum mechanical considerations, is due to Zambrini et al. (\cite{Lescot}).\\
In this paper we follow the second approach, showing how some relevant results of the first one can be recovered in this setting. In particular
we consider pairs ($X$,$W$), where $X$ is a continuous stochastic process and  $W$ is an $m$-dimensional Brownian motion, and we define a class of  transformations characterized by three geometrical objects: a diffeomorphism $\Phi$  describing the transformation of the state variable $X$, a matrix valued function $B$  inducing a general state dependent rotation of the Brownian motion $W$ and a density function $\eta$ representing a random time change of the process ($X$,$W$). We call the triad $(\Phi,B,\eta)$ a general \emph{(finite) stochastic transformation}. \\
Each part of this general transformation $(\Phi,B,\eta)$ has been already considered in some of the previous references: random time change has been used for example in \cite{DeLara1,Grigoriev,Srihirun},  rotation of Brownian motion with a constant matrix $B$ is the \emph{$W$-symmetry} introduced in \cite{Gaeta2} and in \cite{Glover1}  the Authors, facing the problem of generating transformations $\Phi$ preserving the Markov property of a continuous process, acknowledge the deep connection between $\Phi$ and $\eta$ although  they find the candidate function $\Phi$ by exploiting the symmetries of the original process and then construct the appropriate time scale to go with it.

In this paper, for the first time, these three transformations are considered all at once and a geometrical description of general stochastic transformations in terms of isomorphisms of (trivial) principal bundles is provided. With any triad $T=(\Phi,B,\eta)$ we associate a process transformation $P_T$ and a SDE transformation $E_T$ with a relevant probabilistic meaning (Section \ref{section_transformation}). Moreover the set of these general stochastic transformations forms an infinite dimensional Lie group with respect to a composition rule with a deep probabilistic consistency (Section \ref{section_probability_meaning}). In this setting it is natural to consider  the Lie algebra associated with  this group, whose elements $(Y,C,\tau)$ represent  infinitesimal stochastic transformations. Since in the following the transformations $(\Phi,I_m,1)$, involving only state transformation, play an important role, we call them \emph{strong stochastic transformations}. The corresponding \emph{strong  infinitesimal stochastic transformations} have the form $(Y,0,0)$.\\
Given  a SDE with coefficients
$(\mu,\sigma)$  we consider  (weak) solutions   ($X$,$W$) to $(\mu,\sigma)$ and
we call \emph{(weak) symmetry} of $(\mu,\sigma)$  the general stochastic transformation transforming  any (weak) solution $(X,W)$ to $(\mu,\sigma)$ in another (weak) solution ($X'$,$W'$) to $(\mu,\sigma)$, where the relationship between the two Brownian motions $W$ and $W'$ is explicitly available. If the stochastic transformation is of the form $(\Phi,I_m,1)$ we call it a \emph{strong symmetry} of $(\mu,\sigma)$. We write the determining equations for the functions $\Phi,B,\eta$ (or for their infinitesimal generators $Y, C, \tau$) providing a necessary and sufficient condition for the stochastic transformation $(\Phi,B,\eta)$ to be a symmetry of $(\mu,\sigma)$. \\
This general  geometric approach  not only gives us a new perspective on the topic but also allows us to get  some new results. In particular  we prove that, given a SDE admitting  a Lie algebra of infinitesimal symmetries $(Y_i,C_i,\tau_i)$ satisfying suitable  non-degeneracy condition, it is possible to find  a stochastic transformation such that  the transformed  SDE  admits a Lie algebra of strong infinitesimal symmetries of the form $(Y_i,0,0)$. This can be very useful as strong  symmetries are rightly well acknowledged in literature mainly for their connection   with the existence of conservation laws (\cite{Albeverio}, \cite{Unal}) and with the corresponding reducibility properties of a SDE (see \cite{Lazaro_Ortega}).
We apply this general result to a SDE obtained as the (standard) stochastic perturbation of a  symmetric two-dimensional ODE and we explicitly compute  the finite stochastic transformation transforming weak symmetries of this SDE  into  strong symmetries of the corresponding transformed SDE.\\
Moreover we analyze, using this geometric approach,  the two dimensional Brownian motion proving that it admits an infinite number of (weak) symmetries.

The paper is organized as follows: in Section 2 we  introduce  the general idea of a finite stochastic transformation with a significant probabilistic foundation and we define the SDE and the process transformations corresponding to it. In Section 3 we provide  a geometrical description of stochastic transformations in terms of isomorphisms of principal bundles and we discuss the connection between the geometric and the probabilistic frameworks. In Section 4 the previous approach is applied to the study of symmetries of a SDE and  the determining equations are explicitly computed. Finally, in Section 5, the case of  a SDE obtained by stochastic perturbation of a  symmetric two-dimensional ODE and the two-dimensional Brownian motion are analyzed in detail.

\section{SDE Transformations: a probabilistic analysis}\label{section_transformation}

Let $M$ be an open subset of $\mathbb{R}^n$. We denote by
$x=(x^1,...,x^n)^T$ the standard Cartesian coordinate system
on $M$ and  by $\partial_i$ the derivative with respect $x^i$.
In the following $\cdot$ denotes the usual product between matrices.\\

Let us  consider all processes defined on a
probability space $(\Omega,\mathcal{F},\mathbb{P})$ and
denote by $\mathcal{F}_t \subset \mathcal{F}$ a filtration of
$\Omega$. Unless otherwise specified, we assume that the
stochastic processes are  adapted with respect to the filtration
$\mathcal{F}_t$.\\
If $X$ is a stochastic process on $M$ we denote by $X_t$ the value of
the process $X$ at time $t$ and  by $X^i$ the real processes defined as $X^i=x^i(X)$.\\
Let us consider a $m$-dimensional Brownian motion
$W=(W^1,...,W^m)=(W^{\alpha})$ and two smooth functions $\mu:M
\rightarrow \mathbb{R}^n$ and $\sigma:M \rightarrow \matr(n,m)$.

\begin{definition}
A stochastic process $X$ on $M$ and a $m$-dimensional Brownian motion $W$ (in short the process $(X,W)$) solves (in the weak sense) the SDE with coefficients
$\mu,\sigma$ until the stopping time
$\tau$ (or  shortly solves the SDE $(\mu,\sigma)$) if for
any $t \in \mathbb{R}_+$
$$X^i_{t \wedge \tau}-X^i_0=\int_0^{t \wedge \tau}{\mu^i(X_s)ds}+\int_0^{t \wedge \tau}{\sigma_{\alpha}^i(X_s)dW^{\alpha}_s}.$$
If $(X,W)$ solves the SDE $(\mu,\sigma)$ we write, as usual,
\begin{eqnarray*}
dX_t&=&\mu(X_t)dt+\sigma(X_t)\cdot dW_t\\
&=&\mu dt+\sigma \cdot dW_t.
\end{eqnarray*}
\end{definition}

The stopping time $\tau$ is strictly less then the explosion time of the SDE. When not strictly necessary, we  omit the stopping time $\tau$ from the definition of solution to a SDE.

In the following, in order to introduce the random time
change of a solution to a SDE,  we consider only autonomous SDEs. The generalization to nonautonomous SDEs will be considered  in a following paper.

\subsection{SDE Space Transformations}

If $A:M \rightarrow \matr(m,k)$ we write $A^l_r$ for the $l$-th row
and $r$-th column component of the matrix $A$ and identify
$\matr(k,1)$ with $\mathbb{R}^k$. Given a function $\Phi:M \rightarrow
\mathbb{R}^m$ we consider the smooth function
$D(\Phi):M \rightarrow \matr(n,m)$ defined by
$$D(\Phi)^l_i=\partial_i\Phi^l.$$

It is well known that to a SDE $(\mu,\sigma)$  it is possible to
associate a second order differential operator
$$L=A^{ij}\partial_i\partial_j+\mu^i\partial_i,$$
where  $A=\frac{1}{2}\sigma \cdot
\sigma^T$. The operator $L$ is called the infinitesimal
generator of the process and appears for example in the
following  important formula.

\begin{theorem}[Ito formula]
Let $(X,W)$ be a solution to  the SDE $(\mu,\sigma)$ and let $f:M \rightarrow
\mathbb{R}$ be a smooth function. Then  $F=f(X)$ satisfies
$$dF_t=L(f)(X_t)dt+D(f)(X_t) \cdot \sigma(X_t) \cdot dW_t.$$
\end{theorem}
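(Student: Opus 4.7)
The plan is to establish this by the classical route: second-order Taylor expansion of $f$ along the trajectory, substitution of the SDE, and identification of the quadratic-covariation blocks using the standard multiplication rules $dW^\alpha\cdot dW^\beta=\delta^{\alpha\beta}dt$, $dt\cdot dW^\alpha=0$, $dt\cdot dt=0$. Since this is the classical Ito formula, I would primarily cite it; what follows is only a sketch of how one would derive it directly in the notation of the paper.

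First I would localize via a sequence of stopping times so that $X$ stays in a compact subset of $M$ up to $t\wedge\tau$, which makes $f$, $D(f)$, $D^2(f)$, $\mu$ and $\sigma$ bounded along the trajectory, and so allows use of Ito's isometry and dominated convergence. Then, given a partition $0=t_0<\cdots<t_N=t\wedge\tau$ of mesh tending to zero, I would telescope
$$f(X_{t\wedge\tau})-f(X_0)=\sum_{k=0}^{N-1}\bigl[f(X_{t_{k+1}})-f(X_{t_k})\bigr]$$
and expand each term to second order
$$f(X_{t_{k+1}})-f(X_{t_k})=\partial_if(X_{t_k})\Delta X^i_k+\tfrac{1}{2}\partial_i\partial_jf(X_{t_k})\Delta X^i_k\Delta X^j_k+R_k,$$
with $|R_k|\le C|\Delta X_k|^3$.

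Substituting $\Delta X^i_k=\int_{t_k}^{t_{k+1}}\mu^i(X_s)ds+\int_{t_k}^{t_{k+1}}\sigma^i_\alpha(X_s)dW^\alpha_s$, the linear term converges, by definition of the Riemann and Ito integrals with respect to the filtration $\mathcal{F}_t$, to $\int_0^{t\wedge\tau}\partial_if(X_s)\mu^i(X_s)\,ds+\int_0^{t\wedge\tau}\partial_if(X_s)\sigma^i_\alpha(X_s)\,dW^\alpha_s$. For the quadratic term one applies the well-known limit
$$\sum_k \Delta W^\alpha_k\Delta W^\beta_k\,g(X_{t_k})\xrightarrow{\;\mathbb{P}\;}\int_0^{t\wedge\tau}\delta^{\alpha\beta}g(X_s)\,ds,$$
valid for any bounded adapted $g$, while the cross products $\Delta t_k\Delta W^\alpha_k$ and $(\Delta t_k)^2$ give contributions that vanish in $L^1$. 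Taking $g=\tfrac{1}{2}\partial_i\partial_jf\cdot\sigma^i_\alpha\sigma^j_\beta$ produces precisely $\int_0^{t\wedge\tau}A^{ij}\partial_i\partial_jf(X_s)\,ds$, since $A=\tfrac{1}{2}\sigma\cdot\sigma^T$. Finally, $\mathbb{E}[\sum_k|\Delta X_k|^3]\to 0$ by the standard bounds $\mathbb{E}[|\Delta X_k|^2]=O(\Delta t_k)$ and $\mathbb{E}[|\Delta X_k|^4]=O(\Delta t_k^2)$, so the remainder is negligible.

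Summing the three contributions yields the claimed differential identity, with the drift part $\mu^i\partial_if+A^{ij}\partial_i\partial_jf=L(f)$ and the diffusion part $\partial_if\cdot\sigma^i_\alpha=(D(f)\cdot\sigma)_\alpha$. The main technical obstacle is the rigorous passage to the limit for the quadratic variation term against a random coefficient and the handling of the stopping time, both of which are overcome by the localization step and a standard application of Doob's inequality or Ito's isometry; see any textbook on stochastic calculus for the full argument.
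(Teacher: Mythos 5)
Your proposal is fine: the paper itself gives no proof of this theorem, stating it as the well-known Ito formula and using it as a citation, which is exactly what you propose to do, and your sketch of the classical derivation (localization, second-order Taylor expansion, convergence of the quadratic-variation sums, negligibility of the cubic remainder) is the standard correct argument matching the paper's definitions $L=A^{ij}\partial_i\partial_j+\mu^i\partial_i$ with $A=\tfrac{1}{2}\sigma\cdot\sigma^T$ and $D(f)\cdot\sigma$. There is nothing further to compare.
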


\noindent By using the well-known Ito formula we can prove the following

\begin{proposition}\label{proposition_space_transformation}
Let us consider a diffeomorphism $\Phi:M \rightarrow M'$.
If $(X,W)$ is a solution to the SDE $(\mu,\sigma)$, then $(\Phi(X),W)$
is a solution to the SDE $(\mu',\sigma')$ where
\begin{eqnarray*}
\mu'&=&L(\Phi) \circ \Phi^{-1}\\
\sigma'&=&(D(\Phi) \cdot \sigma)\circ \Phi^{-1}.
\end{eqnarray*}
\end{proposition}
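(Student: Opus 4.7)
The plan is to reduce the statement to a componentwise application of the It\^o formula just given, followed by a change of variable through $\Phi^{-1}$. Since $\Phi:M\to M'$ is a diffeomorphism between open subsets of $\mathbb{R}^n$, each of its components $\Phi^l$ is a smooth real-valued function on $M$, so the It\^o formula of the previous theorem is directly applicable to the scalar process $F^l_t:=\Phi^l(X_t)$.

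First I would apply the It\^o formula to each component, obtaining
\[
d(\Phi^l(X_t))=L(\Phi^l)(X_t)\,dt+D(\Phi^l)(X_t)\cdot \sigma(X_t)\cdot dW_t,
\]
and then stack these $n$ scalar equations into a single vector/matrix identity, interpreting $L(\Phi)$ as the vector with components $L(\Phi^l)$ and $D(\Phi)$ as the Jacobian matrix with entries $\partial_i \Phi^l$, to get
\[
d(\Phi(X_t))=L(\Phi)(X_t)\,dt+D(\Phi)(X_t)\cdot\sigma(X_t)\cdot dW_t.
\]

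Next I would set $X'_t:=\Phi(X_t)$ and use the diffeomorphism property to rewrite the coefficients as functions of the new state, via $X_t=\Phi^{-1}(X'_t)$. Substituting yields
\[
dX'_t=(L(\Phi)\circ\Phi^{-1})(X'_t)\,dt+((D(\Phi)\cdot\sigma)\circ\Phi^{-1})(X'_t)\cdot dW_t,
\]
which is exactly the SDE with coefficients $\mu'=L(\Phi)\circ\Phi^{-1}$ and $\sigma'=(D(\Phi)\cdot\sigma)\circ\Phi^{-1}$. Since $\Phi(X)$ and $W$ are adapted to the same filtration as $(X,W)$, the pair $(\Phi(X),W)$ is a weak solution in the sense of the previous definition.

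I do not expect any serious obstacle: the argument is a routine composition of the It\^o formula with a deterministic change of variable. The only minor point requiring attention is the stopping time implicit in the definition of weak solution: one must observe that, because $\Phi$ is a diffeomorphism onto $M'$, the image process $\Phi(X)$ remains in $M'$ precisely for the times at which $X$ remains in $M$, so the same $\tau$ (strictly less than the explosion time of $(\mu,\sigma)$) also stays strictly less than the explosion time of $(\mu',\sigma')$, and no further modification of $\tau$ is needed.
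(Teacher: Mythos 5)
Your proposal is correct and follows essentially the same route as the paper's proof: apply the It\^o formula componentwise to $\Phi^l(X)$ and then rewrite the coefficients as functions of $X'=\Phi(X)$ via $\Phi^{-1}$. The extra remarks on adaptedness and on the stopping time are harmless additions that the paper leaves implicit.
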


\begin{proof}
By using  the Ito formula, if $X'=\Phi(X)$, we have that
$X'^i=\Phi^i(X)$ and so
\begin{eqnarray*}
dX'^i_t&=&L(\Phi^i)(X_t)dt+D(\Phi^i)(X_t) \cdot \sigma(X_t) \cdot dW_t\\
&=&(L(\Phi^i) \circ \Phi^{-1})(X'_t)dt+(D(\Phi^i) \circ
\Phi^{-1})(X'_t)\cdot \sigma(\Phi^{-1}(X'_t)) \cdot dW_t.
\end{eqnarray*}
${}\hfill$\end{proof}

\subsection{SDE Random Time Transformations}

Let $\beta$ be a positive adapted stochastic process such that for any $\omega \in \Omega$ the function
$\beta(\omega):t \mapsto \beta_t(\omega)$ is continuous and strictly increasing. Define
$$\alpha_t=\inf\{s|\beta_s>t\},$$
where, as usual, $\inf(\mathbb{R}_+)=+\infty$.
The process $\alpha$ is an adapted process such that
$$\beta_{\alpha_t}=\alpha_{\beta_t}=t.$$
If $Y$ is a continuous stochastic process we define by $H_{\beta}(Y)$ the continuous stochastic process such that
$$H_{\beta}(Y)_t=Y_{\alpha_t}.$$
The process $H_{\beta}(Y)$ is an adapted process with respect the filtration $\mathcal{F}'_t=\mathcal{F}_{\alpha_t}$.
We restrict ourselves to absolute continuous time change. Given a strictly positive smooth
function $\eta:M \rightarrow \mathbb{R}_+$ and  a stochastic process $X$ defined until the
stopping time $\tau$, we consider   the process defined until
the stopping time $\tau$
$$\beta_{t \wedge \tau}=\int_0^{t \wedge \tau}{\eta(X_s)ds}.$$
If $X$ is a stochastic process, we denote by $H_{\eta}(X):=H_{\beta}(X)$ and it is easy to prove that
\begin{eqnarray*}
d(\alpha_t)&=&\frac{1}{\eta(H_{\eta}(X)_t)}dt.\\
\end{eqnarray*}

\begin{proposition}\label{proposition_time_transformation}
 Let $\eta:M \rightarrow \mathbb{R}_+$ be  a   smooth
function and $(X,W)$ be a solution to the SDE $(\mu,\sigma)$.   Then
$(H_{\eta}(X),H_{\eta}(W'))$, with
$$dW'_t=\sqrt{\eta(X_t)}dW_t,$$
is a solution to the SDE $(\mu',\sigma')$ where
\begin{eqnarray*}
\mu'&=&\frac{1}{\eta}\mu\\
\sigma'&=& \frac{1}{\sqrt{\eta}}\sigma\\
\end{eqnarray*}
\end{proposition}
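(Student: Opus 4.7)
The plan is to combine the Ito-type computation of the previous proposition with the classical time-change theorem for continuous local martingales (Dambis--Dubins--Schwarz, in the form that applies to stochastic integrals). The main object to control is the process $W'_t=\int_0^t\sqrt{\eta(X_s)}\,dW_s$, which is a continuous local martingale with quadratic variation $\langle W'^\alpha,W'^\gamma\rangle_t=\delta^{\alpha\gamma}\int_0^t\eta(X_s)\,ds=\delta^{\alpha\gamma}\beta_t$, so after applying $H_\beta=H_\eta$ the process $\tilde{W}:=H_\eta(W')$ has quadratic variation $\beta_{\alpha_t}=t$ and is therefore a standard $m$-dimensional Brownian motion with respect to the time-changed filtration $\mathcal{F}'_t=\mathcal{F}_{\alpha_t}$. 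This is the step that gives the second entry of the pair the correct law.

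Next, I would translate the integral form of the SDE for $(X,W)$ into the new time scale. Starting from
$$X^i_{\alpha_t}-X^i_0=\int_0^{\alpha_t}\mu^i(X_s)\,ds+\int_0^{\alpha_t}\sigma^i_\alpha(X_s)\,dW^\alpha_s,$$
the $dt$-integral is handled by an ordinary change of variable $s=\alpha_u$, using $d\alpha_u=\frac{1}{\eta(H_\eta(X)_u)}du$ (a fact already stated in the text), which converts it into $\int_0^t \frac{\mu^i}{\eta}(H_\eta(X)_u)\,du$ and yields the drift $\mu'=\mu/\eta$. For the stochastic integral, first rewrite it in terms of the new driving martingale by noting that $dW_s=\frac{1}{\sqrt{\eta(X_s)}}\,dW'_s$ on the original time scale, so
$$\int_0^{\alpha_t}\sigma^i_\alpha(X_s)\,dW^\alpha_s=\int_0^{\alpha_t}\frac{\sigma^i_\alpha(X_s)}{\sqrt{\eta(X_s)}}\,dW'^\alpha_s,$$
and then invoke the time-change formula for stochastic integrals (which says $\int_0^{\alpha_t}H_s\,dW'_s=\int_0^t H_{\alpha_u}\,d\tilde{W}_u$ when $W'$ has an absolutely continuous quadratic variation $\beta$ and $\tilde{W}=H_\eta(W')$) to obtain $\int_0^t\frac{\sigma^i_\alpha}{\sqrt{\eta}}(H_\eta(X)_u)\,d\tilde{W}^\alpha_u$. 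This gives the diffusion coefficient $\sigma'=\sigma/\sqrt{\eta}$.

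Putting the two pieces together, $H_\eta(X)$ satisfies, in integral form,
$$H_\eta(X)^i_t-H_\eta(X)^i_0=\int_0^t\mu'^i(H_\eta(X)_u)\,du+\int_0^t\sigma'^i_\alpha(H_\eta(X)_u)\,d\tilde{W}^\alpha_u,$$
which is exactly the SDE $(\mu',\sigma')$ driven by $\tilde{W}=H_\eta(W')$, proving the proposition.

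The main obstacle is the rigorous application of the time-change formula for stochastic integrals: the integrand $\sigma/\sqrt{\eta}$ evaluated at $X$ must be verified to be progressively measurable with respect to $\mathcal{F}_t$ so that, after composition with $\alpha$, it is progressive with respect to $\mathcal{F}'_t$, and the stopping time $\tau$ has to be tracked carefully (working with $t\wedge\tau$ throughout, and noting that $\beta$ and $\alpha$ map stopping times to stopping times) so that all local martingales involved are well-defined up to the appropriate stopping time. Everything else is essentially a deterministic change of variable combined with the properties of $\beta$ and $\alpha$ already recorded above.
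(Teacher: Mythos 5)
Your proposal is correct and follows essentially the same route as the paper: the paper likewise invokes the time-change invariance of Brownian motion (its Theorem \ref{theorem_time_change}, cited from Oksendal) for the process $H_{\eta}(W')$, and then rewrites the integral form of the SDE on the new time scale using $d\alpha_s=\frac{1}{\eta(H_{\eta}(X)_s)}ds$ for the drift and the time-change formula for the stochastic integral. Your version merely unpacks the quadratic-variation argument behind the cited theorem and is more explicit about measurability and the stopping time, but the substance is identical.
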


The following theorem (see, e.g. \cite{Oksendal}, Corollary 8.5.5) expresses an important invariance property of Brownian
motion.
\begin{theorem}\label{theorem_time_change}
Let $X$ be a continuous  stochastic processes taking values in $M$ and
let $\eta,\alpha_{t'},\beta_t$ be as before. If   $W$ is an $m$-dimensional Brownian motion, denoting by $W'$ the stochastic process such that
$$dW'_t=\sqrt{\eta(X_t)}dW_t,$$
we have that $H_{\eta}(W')$ is an $m$-dimensional Brownian
motion.
\end{theorem}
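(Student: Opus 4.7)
The plan is to prove this by Lévy's characterization of Brownian motion: a continuous $\mathbb{R}^m$-valued local martingale starting at $0$ whose quadratic covariation matrix is $\delta^{\alpha\beta}t$ must be an $m$-dimensional Brownian motion. The whole proof then reduces to checking that $H_\eta(W')$ is a continuous local martingale in the time-changed filtration and computing its covariations.

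First, I would examine $W'$ itself. Since $\eta$ is smooth and strictly positive on $M$ and $X$ is continuous, the process $\sqrt{\eta(X_\cdot)}$ is adapted and locally bounded, so each component $W'^\alpha_t=\int_0^t \sqrt{\eta(X_s)}\,dW^\alpha_s$ is a well-defined continuous local martingale with respect to $\mathcal{F}_t$, and an immediate computation of the Itô covariation gives
$$[W'^\alpha,W'^\beta]_t \;=\; \delta^{\alpha\beta}\!\int_0^t \eta(X_s)\,ds \;=\; \delta^{\alpha\beta}\,\beta_t.$$
Second, I would invoke the standard time-change theorem for continuous local martingales (Revuz--Yor, Ch.~V; this is also the content behind Dambis--Dubins--Schwarz). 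Because $\beta$ is continuous, strictly increasing, and adapted, each $\alpha_t$ is a stopping time for $\mathcal{F}_s$, the time-changed filtration $\mathcal{F}'_t=\mathcal{F}_{\alpha_t}$ satisfies the usual hypotheses, and $H_\eta(W')_t=W'_{\alpha_t}$ is a continuous $\mathcal{F}'_t$-local martingale. Its covariation transforms functorially under the time change, giving
$$[H_\eta(W')^\alpha,H_\eta(W')^\beta]_t \;=\; [W'^\alpha,W'^\beta]_{\alpha_t} \;=\; \delta^{\alpha\beta}\,\beta_{\alpha_t} \;=\; \delta^{\alpha\beta}\,t.$$
Lévy's theorem then concludes that $H_\eta(W')$ is an $m$-dimensional Brownian motion with respect to $\mathcal{F}'_t$.

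The main obstacle is not conceptual but rather the bookkeeping of stopping times and localization. The process $X$ is only defined up to a stopping time $\tau$ (strictly less than its explosion time), and $\eta$ is not assumed bounded on $M$; consequently $\beta_t$ may tend to infinity only at $\tau$ and the inverse $\alpha_t$ is only defined on $[0,\beta_\tau)$. To make the above rigorous one localizes by a sequence of stopping times $\tau_n\nearrow\tau$ for which $\eta\circ X$ is bounded, establishes the Brownian property of $H_\eta(W')$ up to each $\beta_{\tau_n}$, and then passes to the limit. Everything else (the covariation computations, the use of Lévy's theorem) is standard once this localization is in place.
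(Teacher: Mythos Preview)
Your proof via L\'evy's characterization is correct and is the standard argument. The paper does not actually give a proof of this theorem; it simply cites \O ksendal, Corollary 8.5.5, so there is nothing substantive to compare against --- your argument is precisely the one underlying that reference, and your remarks on localization up to the stopping time $\tau$ are an appropriate addition given the paper's standing conventions.
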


\begin{proof}[Proof of Proposition \ref{proposition_time_transformation}]
Let $\tau$ be the stopping time related with the solution $(X,W)$ of
the SDE $(\mu,\sigma)$. Denoting by $\tau':=\beta_{\tau}$, we
prove that $(H_{\eta}(X),H_{\eta}(W'))$
is a solution to the SDE $(\mu',\sigma')$ until the stopping time
$\tau'$. In fact by definition
$X'_{t'}=X_{\alpha_{t'}}$ (with $t'=\beta_t$) and therefore
\begin{eqnarray*}
X_{\tau' \wedge t'}&=&X_{\tau \wedge \alpha_{t'}}\\
&=&\int_0^{\tau \wedge \alpha_{t'}}{\mu(X_s)ds+\sigma(X_s)\cdot dW_s}\\
&=&\int_{0}^{\beta_{\tau} \wedge \beta_{\alpha_{t'}}}{\mu(H_{\eta}(X)_s)d\alpha_s+\sigma(H_{\eta}(X)_s)\cdot d(H_{\eta}(W)_s)}\\
&=&\int_{0}^{\tau' \wedge
t'}{\frac{1}{\eta(X'_s)}\mu(X'_s)ds+\frac{1}{\sqrt{\eta(X'_s)}}\sigma(X'_s) \cdot d(H_{\eta}(W'))_s}.
\end{eqnarray*}
being $\beta_{\alpha_{t'}}=t'$.
${}\hfill$\end{proof}

\subsection{Transformations of  Brownian motion}

In the following if $Z^1,Z^2$ are two $L^2$ real semimartingales
we denote by $[Z^1,Z^2]$ the quadratic covariation between $Z^1$
and $Z^2$.  We recall the following

\begin{lemma}\label{theorem_quadratic_variation}
If $dZ^i_t=\alpha^i_{0,t} dt+\sum_{\beta=1}^{k}\alpha^i_{\beta,t}dW^{\beta_t}$ ($i=1,2$) for some real
stochastic processes $\alpha^i_{\beta}$, then
$$d[Z^1,Z^2]_t=\sum_{\beta=1}^k{\alpha^1_{\beta,t}\alpha^2_{\beta,t}dt}.$$
\end{lemma}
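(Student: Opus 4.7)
The plan is to derive the formula from the bilinearity of the quadratic covariation together with three standard facts about continuous semimartingales: (i) the quadratic covariation of a finite variation process with any semimartingale vanishes, (ii) for the components of an $m$-dimensional Brownian motion one has $[W^\beta,W^\gamma]_t=\delta_{\beta\gamma}\,t$, and (iii) for predictable integrands $H,K$ and continuous semimartingales $Y,Z$, the ``associativity'' identity $\bigl[\int H\,dY,\int K\,dZ\bigr]_t=\int_0^t H_s K_s\,d[Y,Z]_s$ holds.

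Concretely, I would first write each $Z^i$ as a sum of a drift part $A^i_t=\int_0^t \alpha^i_{0,s}\,ds$ and a martingale part $M^i_t=\sum_{\beta=1}^{k}\int_0^t \alpha^i_{\beta,s}\,dW^{\beta}_s$. Bilinearity of $[\cdot,\cdot]$ gives
\begin{equation*}
[Z^1,Z^2]=[A^1,A^2]+[A^1,M^2]+[M^1,A^2]+[M^1,M^2].
\end{equation*}
Since $A^1$ and $A^2$ are continuous processes of finite variation, fact (i) above kills the first three terms, leaving only $[M^1,M^2]$.

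Next I would expand $[M^1,M^2]$ using bilinearity once more over the $k$ Brownian components:
\begin{equation*}
[M^1,M^2]_t=\sum_{\beta,\gamma=1}^{k}\Bigl[\int_0^\cdot \alpha^1_{\beta,s}\,dW^{\beta}_s,\ \int_0^\cdot \alpha^2_{\gamma,s}\,dW^{\gamma}_s\Bigr]_t.
\end{equation*}
Applying (iii) to each summand converts the bracket into $\int_0^t \alpha^1_{\beta,s}\alpha^2_{\gamma,s}\,d[W^{\beta},W^{\gamma}]_s$, and then (ii) collapses the double sum to the diagonal, yielding
\begin{equation*}
[Z^1,Z^2]_t=\sum_{\beta=1}^{k}\int_0^t \alpha^1_{\beta,s}\alpha^2_{\beta,s}\,ds,
\end{equation*}
which is the asserted identity in differential form.

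There is no real obstacle here; the lemma is essentially a bookkeeping consequence of standard stochastic calculus, and each of the three cited facts can be found in any reference on continuous semimartingales. The only point worth stating explicitly is the integrability assumption (the $L^2$ hypothesis on the $Z^i$ noted in the statement) needed so that the stochastic integrals defining $M^i$ are genuine square-integrable martingales and fact (iii) applies without caveats.
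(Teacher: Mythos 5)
Your argument is correct and is essentially the proof the paper has in mind: the paper simply cites Theorem 28.1 of Rogers--Williams, which is exactly your fact (iii), and the remaining steps (bilinearity, vanishing of brackets against finite-variation parts, and $[W^{\beta},W^{\gamma}]_t=\delta_{\beta\gamma}t$) are the routine bookkeeping that the citation is meant to cover. Your remark about the $L^2$ hypothesis is a reasonable precaution but adds nothing beyond what the cited theorem already handles.
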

\begin{proof}
The proof is a consequence of Theorem 28.1 in \cite{Rogers_Williams}.
${}\hfill$\end{proof}

\begin{proposition}\label{proposition_Brownian_transformation}
Let $B:M \rightarrow SO(m)$ be a smooth function and $(X,W)$ be a solution to a SDE $(\mu,\sigma)$.
Then $(X,W')$, where
$$dW'_t=B(X_t) \cdot dW_t,$$
is a solution to the SDE $(\mu',\sigma')$
\begin{eqnarray*}
\mu'&=&\mu,\\
\sigma'&=&\sigma \cdot B^{-1}.\\
\end{eqnarray*}
\end{proposition}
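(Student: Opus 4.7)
The plan is to verify two things: first, that the candidate $W'$ is indeed an $m$-dimensional Brownian motion, and second, that rewriting $dX_t$ in terms of $dW'_t$ produces the announced coefficients $(\mu,\sigma\cdot B^{-1})$. Both steps follow once Lemma \ref{theorem_quadratic_variation} and L\'evy's characterization of Brownian motion are invoked, and because no time change is needed the argument is strictly simpler than the one behind Theorem \ref{theorem_time_change}.

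For the first step I would note that $W'^{\alpha}_t=\int_0^t B^{\alpha}_{\beta}(X_s)\,dW^{\beta}_s$ is a continuous local martingale starting at $0$, since $B$ is smooth and therefore locally bounded along the continuous path of $X$. Applying Lemma \ref{theorem_quadratic_variation} with drift coefficient zero and diffusion coefficients $B^{\alpha}_{\beta}(X)$ and $B^{\gamma}_{\beta}(X)$ gives
$$d[W'^{\alpha},W'^{\gamma}]_t=\sum_{\beta}B^{\alpha}_{\beta}(X_t)B^{\gamma}_{\beta}(X_t)\,dt=(B\cdot B^T)^{\alpha\gamma}(X_t)\,dt,$$
and the hypothesis $B(x)\in SO(m)$ forces $B\cdot B^T=I_m$, so $[W'^{\alpha},W'^{\gamma}]_t=\delta^{\alpha\gamma}t$; L\'evy's theorem then identifies $W'$ as a standard $m$-dimensional Brownian motion. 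For the second step, $B(x)\in SO(m)$ is invertible with $B^{-1}=B^T$ smooth, and associativity of the It\^o integral turns $dW'_t=B(X_t)\cdot dW_t$ into $dW_t=B^{-1}(X_t)\cdot dW'_t$, because $\int_0^t B^{-1}(X_s)\cdot B(X_s)\cdot dW_s=W_t$. Substituting into the original SDE yields
$$dX_t=\mu(X_t)\,dt+\sigma(X_t)\cdot B^{-1}(X_t)\cdot dW'_t,$$
which is precisely the SDE $(\mu,\sigma\cdot B^{-1})$ driven by $W'$.

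I do not expect any serious obstacle. The only point worth being careful about is the differential inversion $dW_t=B^{-1}(X_t)\cdot dW'_t$, which strictly speaking is a statement about It\^o integrals and relies on associativity together with the pathwise identity $B^{-1}(X_\cdot)\cdot B(X_\cdot)=I_m$; everything else is L\'evy's characterization plus direct substitution.
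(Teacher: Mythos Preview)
Your proposal is correct and follows essentially the same approach as the paper's proof: both verify that $W'$ is a Brownian motion by computing $[W'^{\alpha},W'^{\gamma}]_t=\delta^{\alpha\gamma}t$ via the orthogonality $B\cdot B^T=I_m$ and invoking L\'evy's characterization, and both obtain the new SDE by the substitution $dW_t=B^{-1}(X_t)\cdot dW'_t$. The only cosmetic difference is the order of the two steps and that you are slightly more explicit about the associativity of the It\^o integral underlying the inversion.
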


\begin{proof}
The only thing to prove is that $W'$ is a Brownian motion. Indeed,
by the properties of Ito integral, we have
$$dX_t=\mu dt+\sigma \cdot dW_t=\mu dt+\sigma \cdot B^{-1} dW'_t=\mu'dt+\sigma' \cdot dW'_t.$$
First of all for any $\alpha$, $W'^{\alpha}$ is a local
martingale, because it is an Ito integral along the local
martingale $W^{\beta}$. On the other hand, by the properties of
the Ito integral, we have
\begin{eqnarray*}
[W'^{\alpha},W'^{\beta}]_t&=&\int_0^t{B^{\alpha}_{\gamma}B^{\beta}_{\delta}d[W^{\gamma},W^{\delta}]_s}\\
&=&\int_0^t{B^{\alpha}_{\gamma}B^{\beta}_{\delta}\delta^{\gamma\delta}ds}\\
&=&\int_0^t{\sum_{\gamma}B^{\alpha}_{\gamma}B^{\beta}_{\gamma}ds}\\
&=&\int_0^t{\delta^{\alpha\beta}ds}\\
&=&\delta^{\alpha\beta}t,
\end{eqnarray*}
where we use
$[W^{\gamma},W^{\delta}]=\delta^{\gamma\delta}s$  and
$B \cdot B^T=I$. The Levy characterization of Brownian motion (see,
for example, \cite{Rogers_Williams}, Chapter 4, Thm.(33.1)) ensures that $W'$ is a Brownian motion.
${}\hfill$\end{proof} \\

\subsection{Finite Stochastic Transformations}

In the following we consider two  open subsets $M',M''$  of $\mathbb{R}^n$
diffeomorphic to $M$, and we denote by $SO(m)$ the Lie
group of orthogonal matrices.

\begin{definition}\label{finitetrasformation}
Let $\Phi:M \rightarrow M'$ be a diffeomorphism, and let $B:M
\rightarrow SO(m)$ and $\eta:M \rightarrow \mathbb{R}_+$ be smooth
functions. We call the triad $T:=(\Phi,B,\eta)$  a (finite) stochastic transformation
 from $M$ onto $M'$ and we denote by $S_m(M,M')$ the set of all stochastic transformations from $M$ onto $M'$. If $T$ is of the form $T=(\Phi,I_m,1)$ we call $T$  a strong stochastic transformation and we denote the set of strong stochastic transformations by $SS_m(M,M')$.
\end{definition}

\begin{definition}\label{processtrasformation}
Let $T=(\Phi,B,\eta)$ be a stochastic transformation. If the pair $(X,W)$ is a continuous stochastic process, with   $X$ taking values on
$M$ and $W$ being an $m$-dimensional Brownian motion, we define the process $P_T(X,W)=(P_T(X),P_T(W))$ where $P_T(X)$ takes values on $M'$, given by
\begin{eqnarray*}
P_T(X)&=&\Phi(H_{\eta}(X)),\\
dW'_t&=&\sqrt{ \eta(X_t)} B(X_t) \cdot dW_t\\
P_T(W)&=&H_{\eta}(W').
\end{eqnarray*}
We call the process $P_T(X,W)$ the transformed process of $(X,W)$ with respect to $T$, and we call the map $P_T$ the process transformation associated with $T$.
\end{definition}

We remark that if   $T$ is a strong stochastic transformation and  $W$ is a Brownian motion, then  $P_T(W)=W$.

\begin{definition}\label{SDEtrasformation}
Let $T=(\Phi,B,\eta)$ be a stochastic transformation. If the pair $(\mu,\sigma)$ is a SDE on
$M$, we define $E_T(\mu,\sigma)=(E_T(\mu),E_T(\sigma))$ the SDE on $M'$ given by
\begin{eqnarray*}
E_T(\mu)&=&\left(\frac{1}{\eta} L(\Phi) \right) \circ \Phi^{-1}\\
E_T(\sigma)&=&\left( \frac{1}{\sqrt{\eta}} D(\Phi) \cdot \sigma \cdot
B^{-1}\right) \circ \Phi ^{-1}.
\end{eqnarray*}
We call the SDE $E_T(\mu,\sigma)$ the transformed SDE of $(\mu,\sigma)$ with respect to $T$, and we call the map $E_T$ the SDE transformation associated with $T$.
\end{definition}

\begin{theorem}\label{theorem_transformation}
Let $T$ be a stochastic transformation and let $(X,W)$ be a solution to the
SDE $(\mu,\sigma)$, then $P_T(X,W)$ is a solution to the SDE
$E_T(\mu,\sigma)$.
\end{theorem}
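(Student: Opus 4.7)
The plan is to factor the stochastic transformation $T=(\Phi,B,\eta)$ into its three elementary components and then chain the three propositions already proved in this section. Specifically, I would regard $T$ as the composition $T=T_\Phi\circ T_\eta\circ T_B$, where $T_B=(\mathrm{id}_M,B,1)$, $T_\eta=(\mathrm{id}_M,I_m,\eta)$ and $T_\Phi=(\Phi,I_m,1)$, and verify that this composition at the level of the process produces exactly the $P_T(X,W)$ of Definition \ref{processtrasformation}, and likewise that the effect on the coefficients is exactly $E_T(\mu,\sigma)$.

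First I would apply Proposition \ref{proposition_Brownian_transformation} to $(X,W)$: defining $W''$ by $dW''_t=B(X_t)\cdot dW_t$, the pair $(X,W'')$ is a (weak) solution to $(\mu,\sigma\cdot B^{-1})$. Next I would apply Proposition \ref{proposition_time_transformation} to this new solution with the time change induced by $\eta$: letting $dW'_t=\sqrt{\eta(X_t)}\,dW''_t=\sqrt{\eta(X_t)}B(X_t)\cdot dW_t$, one obtains that $(H_\eta(X),H_\eta(W'))$ solves the SDE with coefficients $\mu/\eta$ and $\sigma\cdot B^{-1}/\sqrt{\eta}$. Finally I would apply Proposition \ref{proposition_space_transformation} with the diffeomorphism $\Phi$ to conclude that $\bigl(\Phi(H_\eta(X)),H_\eta(W')\bigr)=P_T(X,W)$ solves an SDE whose drift and diffusion matrices are obtained by pushing $\mu/\eta,\sigma\cdot B^{-1}/\sqrt{\eta}$ through $\Phi$.

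What needs to be checked is that the drift and diffusion obtained at the end of this chain coincide with $E_T(\mu)$ and $E_T(\sigma)$ of Definition \ref{SDEtrasformation}. The diffusion coefficient matches immediately: it is $\bigl(D(\Phi)\cdot\sigma\cdot B^{-1}/\sqrt{\eta}\bigr)\circ\Phi^{-1}$. For the drift, I would compute the second order generator of the intermediate SDE $(\mu/\eta,\sigma\cdot B^{-1}/\sqrt{\eta})$: since $B\cdot B^T=I_m$, its diffusion matrix $A'$ satisfies $A'=\tfrac{1}{2\eta}\sigma\cdot B^{-1}\cdot(B^{-1})^T\cdot\sigma^T=A/\eta$, so the generator is exactly $L/\eta$; applying Proposition \ref{proposition_space_transformation} therefore produces the drift $\bigl((1/\eta)L(\Phi)\bigr)\circ\Phi^{-1}=E_T(\mu)$.

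The only subtle point — and the place I expect to have to argue most carefully — is the legality of this factorisation, namely that performing the rotation before the time change yields the same pair $P_T(X,W)$ that Definition \ref{processtrasformation} prescribes. This reduces to the identity $dW'_t=\sqrt{\eta(X_t)}B(X_t)\cdot dW_t$ being invariant under the order (rotate then rescale versus rescale then rotate), which is immediate since $\eta$ is scalar, together with the fact that $H_\eta$ commutes with $\Phi$ acting pointwise on paths. Once these two observations are recorded, the theorem follows by concatenating the three propositions without any further stochastic calculation.
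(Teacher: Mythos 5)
Your proposal is correct and follows essentially the same route as the paper, which likewise proves the theorem by composing Propositions \ref{proposition_Brownian_transformation}, \ref{proposition_time_transformation} and \ref{proposition_space_transformation} in that order (rotation and time change first, space transformation last). Your explicit verification that the intermediate generator is $L/\eta$ and that the composed process matches Definition \ref{processtrasformation} simply fills in details the paper leaves implicit.
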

\begin{proof}
The proof is a simple combination of Propositions
\ref{proposition_space_transformation}, Proposition
\ref{proposition_time_transformation} and Proposition
\ref{proposition_Brownian_transformation}. Indeed if we first
apply time and Brownian transformations, i.e. Proposition \ref{proposition_time_transformation} and Proposition
\ref{proposition_Brownian_transformation}  (the order of these two
applications does not matter) and then we apply space
transformation, i.e. Proposition \ref{proposition_space_transformation}, we obtain the thesis.
${}\hfill$\end{proof}

\section{SDE Transformations: a geometric analysis}

\subsection{The geometric description of stochastic transformations}

Let us consider the group $G=SO(m) \times \mathbb{R}_+$, with the natural product given by $g_1 \cdot g_2=(A_1 \cdot A_2,\zeta_1 \zeta_2)$, where  $g_1=(A_1,\zeta_1) $ and $g_2=(A_2,\zeta_2) $.\\
Since the manifold $M \times G$ is a trivial principal bundle $\pi_M: M \times G \longrightarrow  M$ with structure group $G$,
we can consider the following action of the group $G$ on $M \times G$ leaving $M$ invariant
\begin{eqnarray*}
R_{M,h}: & M \times G &\longrightarrow  M \times G\\
& (x,g) & \longmapsto (x,g \cdot h).
\end{eqnarray*}
\begin{definition}\label{definition_isomorphism} Given two (trivial) principal bundles $M \times G$ and $ M' \times G$, an isomorphism $F:M \times G \rightarrow M' \times G$ is a diffeomorphism that preserves the structure of principal bundle of $M \times G$ and of $M' \times G$. This means that there exists a diffeomorphism $\Phi: M \rightarrow M'$ such that
$$F \circ \pi_{M'} = \pi_M \circ \Phi,$$
and, for any $h \in G$,
$$F \circ R_{M,h}= R_{M',h} \circ F.$$
We denote by $\Iso(M \times G, M' \times G)$ the set of isomorphisms between $M \times G$ and $M' \times G$.
\end{definition}
It is easy to check that the previous definition ensures that any $F\in \Iso(M \times G, M' \times G)$ is completely determined by its value on $(x, e)$, (where $e$ is the unit of $G$) i.e. there is a one-to-one correspondence between $F$ and the pair $F(x,e)=(\Phi(x),g)$. Therefore there exists a natural identification between a stochastic transformation $T=(\Phi,B,\eta) \in S_{m}(M,M')$ and the isomorphism $F_T$ defined by
$$F_T(x,g)=(\Phi(x),(B(x),\eta(x)) \cdot g)$$
and the set $S_m(M,M')$ inherits the properties of the set $\Iso(M \times G, M' \times G)$. In particular, the natural composition of an element of $\Iso(M \times G, M' \times G)$ with an element of $\Iso(M' \times G, M'' \times G)$ to give an element  of $\Iso(M \times G, M'' \times G)$ ensures the existence of a natural composition law between elements of $S_m(M,M')$ and of $S_m(M',M'')$. If $T=(\Phi,B,\eta) \in S_m(M,M')$ and $\tilde{T}=(\tilde{\Phi},\tilde{B},\tilde{\eta}) \in S_m(M',M'')$, we have
$$\tilde{T} \circ T=(\tilde{\Phi} \circ \Phi,(\tilde{B} \circ \Phi) \cdot B, (\tilde{\eta} \circ \Phi)\eta).$$
Moreover, since $\Iso(M \times G,M' \times G)$ is a subset of the diffeomorphism between $M \times G$ and $M' \times G$, if $T \in S_m(M,M')$ we can define its inverse $T^{-1} \in S_m(M',M)$ as
$$T^{-1}=(\Phi^{-1},(B \circ \Phi^{-1})^{-1},(\eta \circ \Phi^{-1})^{-1}).$$
The set $S_m(M):=S_m(M,M)$ is a group with respect to the composition $\circ$ and the identification
 of $S_m(M)$ with $\Iso(M \times G, M \times G)$ (which is a closed subgroup of the group of diffeomorphism of $M \times G$) suggests to consider  the corresponding Lie algebra $V_m(M)$.\\
 For later use, in the following we provide a description of the elements of $V_m(M)$.

Given a one parameter group  $T_a=(\Phi_a,B_a,\eta_a) \in S_m(M)$,  there exist a vector field $Y$ on $M$, a smooth function $C:M \rightarrow so(m)$ (where $so(m)$ is the Lie algebra of antisymmetric matrices), and a smooth function $\tau:M \rightarrow \mathbb{R}$ such that
\begin{equation}\label{equation_infinitesimal_SDE1}\begin{array}{ccc}
Y(x)&:=&\partial_a(\Phi_a(x))|_{a=0}\\
C(x)&:=&\partial_a(B_a(x))|_{a=0}\\
\tau(x)&:=&\partial_a(\eta_a(x))|_{a=0}.
\end{array}
\end{equation}
Conversely, considering  $Y,C,\tau$  as above, the one parameter solution $(\Phi_a,B_a,\eta_a)$ to the equations
\begin{equation}\label{equation_infinitesimal_SDE2}\begin{array}{lcr}
\partial_a(\Phi_a(x))&=&Y(\Phi_a(x))\\
\partial_a(B_a(x))&=&C(\Phi_a(x)) \cdot B_a(x)\\
\partial_a(\eta_a(x))&=&\tau(\Phi_a(x))\eta_a(x).
\end{array}
\end{equation}
with initial condition $\Phi_0=id_M$, $B_0=I_m$ and $\eta_0=1$, is a one parameter group in $S_m(M)$. For this reason we identify the elements of $V_m(M)$ with the triples $(Y,C,\tau)$. \\
\begin{definition}
A triad $V=(Y,C,\tau)\in V_m(M)$, where  $Y$ is a vector field on $M$ and  $C:M \rightarrow so(m)$  and  $\tau:M \rightarrow \mathbb{R}$  are smooth functions, is an  infinitesimal stochastic transformations. If $V$ is of the form  $V=(Y,0,0)$  we call $V$ a strong infinitesimal stochastic transformations, as the corresponding one-parameter group is a group of strong stochastic transformations.
\end{definition}
Since $V_m(M)$ is a sub-Lie algebra of the set of vector fields on $M \times G$, the standard Lie brackets between vector fields on $M \times G$ induces some Lie brackets on $V_m(M)$. Indeed, if $V_1=(Y_1,C_1,\tau_1),V_2=(Y_2,C_2,\tau_2) \in V_m(M)$ are two infinitesimal stochastic transformations, we have
\begin{equation}\label{equation_infinitesimal_SDE5}
[V_1,V_2]=([Y_1,Y_2],Y_1(C_2)-Y_2(C_1)-\{C_1,C_2\},Y_1(\tau_2)-Y_2(\tau_1)),
\end{equation}
where $\{\cdot,\cdot\}$ denotes the usual commutator between matrices.\\
Furthermore the identification of  $T=(\Phi,B,\eta) \in S_m(M,M')$  with  $F_T\in \Iso(M \times G, M' \times G)$ allows us to define  the push-forward $T_*(V)$  of $V \in V_m(M)$ as
\begin{equation}\label{equation_infinitesimal_SDE4}
((D(\Phi) \cdot Y) \circ \Phi ^{-1} ,(B \cdot C \cdot B^{-1}+Y(B) \cdot B^{-1}) \circ \Phi^{-1},(\tau+Y(\eta)\eta^{-1})\circ \Phi^{-1}).
\end{equation}
Analogously, given $V' \in V_m(M')$, we can consider  the pull-back of $V'$  defined as $T^*(V')=(T^{-1})_*(V')$.

The following Theorem shows that any Lie algebra of general infinitesimal stochastic transformations satisfying a non-degeneracy condition, can be locally transformed, by action of the push-forward of a suitable stochastic transformation $T \in S_m(M)$,  into a Lie algebra of strong infinitesimal stochastic transformations.

\begin{theorem}\label{theorem_infinitesimal_SDE1}
Let $K=\spann \{V_1,...,V_k\}$ be a Lie algebra of $V_m(M)$ and
let $x_0 \in M$ be such that $Y_1(x_0),...,Y_k(x_0)$ are linearly
independent (where $V_i=(Y_i,C_i,\tau_i)$). Then there exist an
open neighborhood $U$ of $x_0$ and a stochastic transformation $T \in
S_m(U)$ of the form $T=(id_U,B,\eta)$ such that
$T_*(V_1),...,T_*(V_k)$ are strong infinitesimal stochastic
transformations in $V_m(U)$. Furthermore the smooth functions
$B,\eta$ are solutions to the equations
\begin{eqnarray*}
Y_i(B)&=&-B \cdot C_i\\
Y_i(\eta)&=&-\tau_i \eta,
\end{eqnarray*}
for $i=1,...,k$.
\end{theorem}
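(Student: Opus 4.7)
Setting $\Phi = id_U$ in the push-forward formula \refeqn{equation_infinitesimal_SDE4} yields
$$T_*(V_i) = (Y_i,\, B \cdot C_i \cdot B^{-1} + Y_i(B) \cdot B^{-1},\, \tau_i + Y_i(\eta) \eta^{-1}),$$
and requiring this to coincide with the strong triad $(Y_i, 0, 0)$ for every $i$ is equivalent to the two systems
$$Y_i(B) = -B \cdot C_i, \qquad Y_i(\eta) = -\tau_i \eta, \qquad i = 1, \ldots, k,$$
which are precisely the equations stated in the theorem. The problem is thus reduced to producing, on some neighbourhood $U$ of $x_0$, smooth functions $B : U \to SO(m)$ and $\eta : U \to \mathbb{R}_+$ solving this overdetermined system, say with initial values $B(x_0) = I_m$ and $\eta(x_0) = 1$. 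The plan is to verify formal integrability using the Lie algebra structure of $K$, to invoke a Frobenius-type argument, and then to check that the target constraints $B \in SO(m)$ and $\eta > 0$ are preserved.

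Since $K$ is closed under brackets, for every $i, j$ one has $[V_i, V_j] = \sum_l c^l_{ij} V_l$ for some constants $c^l_{ij}$; by formula \refeqn{equation_infinitesimal_SDE5} this translates into the three identities $[Y_i, Y_j] = \sum_l c^l_{ij} Y_l$, $Y_i(C_j) - Y_j(C_i) - \{C_i, C_j\} = \sum_l c^l_{ij} C_l$ and $Y_i(\tau_j) - Y_j(\tau_i) = \sum_l c^l_{ij} \tau_l$. A direct computation of $[Y_i, Y_j](B) = Y_i(-B \cdot C_j) - Y_j(-B \cdot C_i)$, substituting the proposed equations, gives $B \cdot \{C_i, C_j\} - B \cdot (Y_i(C_j) - Y_j(C_i)) = -B \sum_l c^l_{ij} C_l$, which matches the identity $[Y_i, Y_j](B) = \sum_l c^l_{ij} Y_l(B)$ forced by the first bracket relation. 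The analogous verification for $\eta$ is simpler, since no matrix commutator appears, and relies only on the third identity. Hence the system is formally integrable.

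Combining this compatibility with the involutivity of $\spann\{Y_1, \ldots, Y_k\}$ and the linear independence of $Y_1(x_0), \ldots, Y_k(x_0)$, one lifts each $Y_i$ to a vector field $\tilde Y_i$ on $U \times SO(m) \times \mathbb{R}_+$ whose horizontal component is $Y_i$ and whose fibre components are $-B \cdot C_i$ and $-\tau_i \eta$; the previous paragraph is precisely the statement that the $\tilde Y_i$ still form an involutive distribution, so by the Frobenius theorem the integral leaf through $(x_0, I_m, 1)$ is, after possibly shrinking $U$, the graph of a smooth pair $(B, \eta)$ solving the system. That $B$ actually takes values in $SO(m)$ follows from $C_i \in so(m)$: differentiating $B \cdot B^T$ along $Y_i$ yields $-B(C_i + C_i^T) B^T = 0$, so $B \cdot B^T \equiv I_m$; and $\eta$ remains strictly positive because it obeys a scalar linear ODE with positive initial datum. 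The genuinely delicate step is the Frobenius argument on this prolonged bundle: the purely algebraic compatibility established in paragraph two must be promoted to true integrability on a higher-dimensional manifold, and it is exactly here that the hypothesis that $K$ be a Lie algebra (and not merely a set of infinitesimal symmetries) is indispensable.
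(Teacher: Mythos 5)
Your proposal follows essentially the same route as the paper: reduce via the push-forward formula with $\Phi=id_U$ to the overdetermined linear systems $Y_i(B)=-B\cdot C_i$, $Y_i(\eta)=-\tau_i\eta$, and use the Lie-algebra closure of $K$ (through formula \refeqn{equation_infinitesimal_SDE5}) as the integrability condition guaranteeing a solution; you in fact supply details the paper leaves implicit, namely the Frobenius argument on the prolonged bundle and the verification that $B$ stays in $SO(m)$ and $\eta$ stays positive. The only point to tighten is that when $k<n$ the integral leaf through $(x_0,I_m,1)$ is $k$-dimensional and so is not literally a graph over $U$; one must prescribe $(B,\eta)$ on a transversal to the foliation generated by $Y_1,\dots,Y_k$ and propagate along the leaves, the compatibility relations you checked ensuring this is well defined --- a standard adjustment that the paper's own proof also glosses over.
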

\begin{proof}
By equation \refeqn{equation_infinitesimal_SDE4} with  $T=(id_U,B,\eta)$ we have
$$T_*(V_i)=(Y_i,Y_i(B)\cdot B^{-1}+B \cdot C_i \cdot B^{-1},Y_i(\eta)\eta^{-1}+\tau_i),$$
and $T_*(V_i)$ is a strong infinitesimal stochastic transformation if and only if
\begin{eqnarray}
Y_i(B)\cdot B^{-1}+B \cdot C_i \cdot B^{-1}&=&0 \label{equation_matrix1}\\
Y_i(\eta)\eta^{-1}+\tau_i&=&0.\label{equation_matrix2}
\end{eqnarray}
Denote by $L_i,N_i$ the linear operators on $\matr(m,m)$-valued and $\mathbb{R}_+$-valued smooth functions respectively such that
\begin{eqnarray*}
L_i(B):&=&Y_i(B)+B \cdot C_i=(Y_i+R_{C_i})(B)\\
N_i(B):&=&Y_i(\eta)+ \eta \tau_i=(Y_i+R_{\tau_i})(\eta),
\end{eqnarray*}
where $R_{C_i},R_{\tau_i}$ are the operators of right multiplication.
A sufficient condition for the existence of a non-trivial solution to equations \refeqn{equation_matrix1} and \refeqn{equation_matrix2}, is that there exist some real constants $c_{i,j}^k,d_{i,j}^k$ such that
\begin{eqnarray}
L_i L_j- L_J L_i&=&\sum_k c^k_{i,j} L_k\label{equation_matrix3}\\
N_i N_j-N_j N_i &=&\sum_k d^k_{i,j} N_k.\label{equation_matrix4}
\end{eqnarray}
A simple computation shows   that
\begin{eqnarray}
L_i L_j- L_J L_i&=&[Y_i,Y_j]+R_{Y_i(C_j)-Y_j(C_i)-\{C_i,C_j\}}\label{equation_matrix5}\\
&=&[Y_i,Y_j]+R_{Y_i(\tau_j)-Y_j(\tau_i)}.\label{equation_matrix6}
\end{eqnarray}
Since $V_i=(Y_i,C_i,\tau_i)$ form a Lie algebra, there exist some constants $f_{i,j}^k$ such that
\begin{eqnarray*}
[V_i,V_j]&=&([Y_i,Y_j],Y_i(C_j)-Y_j(C_i)-\{C_i,C_j\},Y_i(\tau_j)-Y_j(\tau_i))\\
&=&\left(\sum_k f_{i,j}^k Y_k, \sum_k f_{i,j}^k C_k, \sum_k f^k_{i,j} \tau_k\right).
\end{eqnarray*}
Comparing the last equality with equations \refeqn{equation_matrix5} and \refeqn{equation_matrix6} we find   equations \refeqn{equation_matrix3} and \refeqn{equation_matrix4} and this completes the proof.
${}\hfill$\end{proof}

\subsection{Probabilistic foundation of the geometric description}\label{section_probability_meaning}
In this subsection we show how the identification of stochastic transformations with the isomorphisms of suitable trivial principal bundles and the resulting natural definition of composition of stochastic transformations has a deep probabilistic counterpart in terms of SDE and process transformations introduced in Definition \ref{processtrasformation} and \ref{SDEtrasformation}.

\begin{theorem}\label{proposition_transformation}
If $T \in S_m(M,M')$ and $T' \in S_m(M',M'')$ are two stochastic
transformations, then
\begin{eqnarray*}
P_{T'} \circ P_T&=&P_{T' \circ T}\\
E_{T'} \circ E_T&=&E_{T' \circ T}.
\end{eqnarray*}

\end{theorem}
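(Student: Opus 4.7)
The plan is to verify both equalities directly from the definitions. The composition rule $\tilde T \circ T = (\tilde\Phi \circ \Phi,(\tilde B \circ \Phi)\cdot B,(\tilde\eta \circ \Phi)\eta)$ was engineered in Section 3 precisely to make the identification $T \leftrightarrow F_T$ a group homomorphism into $\Iso(M \times G, M' \times G)$, so the geometric side already obeys the composition law. The content of the theorem is that the probabilistic constructions $P_T$ and $E_T$ are compatible with this same law, which must be checked by hand.

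For $E_{T'}\circ E_T = E_{T'\circ T}$ I would treat the diffusion coefficient first. Expanding $E_{T'}(E_T(\mu),E_T(\sigma))$ via Definition \ref{SDEtrasformation}, the outer layer contributes $\tfrac{1}{\sqrt{\tilde\eta}} D(\tilde\Phi)\cdot(\cdot)\cdot \tilde B^{-1}$ composed with $\tilde\Phi^{-1}$, while the inner coefficient already carries $\tfrac{1}{\sqrt{\eta}} D(\Phi)\cdot\sigma\cdot B^{-1}$ composed with $\Phi^{-1}$. The chain rule $D(\tilde\Phi\circ\Phi)=(D(\tilde\Phi)\circ\Phi)\cdot D(\Phi)$ and the matrix identity $(P\cdot Q)^{-1}=Q^{-1}\cdot P^{-1}$ reassemble these terms into $\bigl(\tfrac{1}{\sqrt{(\tilde\eta\circ\Phi)\eta}} D(\tilde\Phi\circ\Phi)\cdot\sigma\cdot((\tilde B\circ\Phi)\cdot B)^{-1}\bigr)\circ(\tilde\Phi\circ\Phi)^{-1}$, which is exactly $E_{T'\circ T}(\sigma)$. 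For the drift, one evaluates $L_{E_T(\mu,\sigma)}(\tilde\Phi)$ (the second order operator associated to the inner SDE, applied to $\tilde\Phi$) and rescales by $1/\tilde\eta$; this is the Ito-type computation already implicit in Propositions \ref{proposition_space_transformation}, \ref{proposition_time_transformation} and \ref{proposition_Brownian_transformation}, and after the same chain-rule bookkeeping collapses to $\tfrac{1}{(\tilde\eta\circ\Phi)\eta}L_{(\mu,\sigma)}(\tilde\Phi\circ\Phi)$ composed with $(\tilde\Phi\circ\Phi)^{-1}$, namely $E_{T'\circ T}(\mu)$.

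For $P_{T'}\circ P_T = P_{T'\circ T}$ the space part follows from $\tilde\Phi\circ\Phi$ being a diffeomorphism. The crucial step is the composition of two absolutely continuous time changes. With $\beta_t=\int_0^t\eta(X_s)\,ds$ and $\alpha$ its inverse, the time change of the intermediate process $P_T(X)=\Phi(H_\eta(X))$ has rate $\tilde\eta\circ\Phi$; substituting $u=\alpha_s$ (so $ds=\eta(X_u)\,du$) gives
\begin{equation*}
\tilde\beta_t=\int_0^t(\tilde\eta\circ\Phi)(X_{\alpha_s})\,ds=\int_0^{\alpha_t}(\tilde\eta\circ\Phi)(X_u)\eta(X_u)\,du,
\end{equation*}
so the composite time change on $X$ is exactly the one with rate $\hat\eta:=(\tilde\eta\circ\Phi)\eta$, i.e.\ $H_{\tilde\eta}\circ H_\eta = H_{\hat\eta}$ as operators on continuous processes. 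The Brownian component combines two rotations and two scalings: $\sqrt{\tilde\eta\circ\Phi}\,(\tilde B\circ\Phi)$ acts on the already transformed differential $\sqrt\eta\,B\cdot dW$, and using $\sqrt{\tilde\eta\circ\Phi}\sqrt\eta=\sqrt{\hat\eta}$ together with the matrix product $(\tilde B\circ\Phi)\cdot B$ reproduces the defining expression of $P_{T'\circ T}(W)$ once the outer time change is pushed back through $H$.

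The main technical obstacle is the time-change substitution, which requires careful tracking of which random time parameterises which process; the rest is algebraic bookkeeping (chain rule, matrix inverses and $\sqrt{fg}=\sqrt f\sqrt g$). As a consistency check, Theorem \ref{theorem_transformation} applied twice guarantees that $P_{T'}\circ P_T(X,W)$ and $P_{T'\circ T}(X,W)$ both solve the SDE $E_{T'}(E_T(\mu,\sigma)) = E_{T'\circ T}(\mu,\sigma)$ once the second equality is established, which reassures that the two processes can indeed be identified.
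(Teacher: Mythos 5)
Your proposal is correct and follows essentially the same route as the paper: a direct verification from the definitions, whose only nontrivial ingredient is that the composition of the two absolutely continuous time changes has density $(\tilde\eta\circ\Phi)\,\eta$, which you obtain by the same substitution $u=\alpha_s$ that the paper uses. The paper in fact only writes out the $X$-component in detail and declares the $W$- and $(\mu,\sigma)$-components ``similar'', so your sketches of the chain-rule bookkeeping for $E_T$ and of the combined rotation/scaling for the Brownian part supply exactly the omitted pieces.
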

\begin{proof}
We have to prove that, for any stochastic process $(X,W)$ and for any SDE $(\mu,\sigma)$, we have
\begin{eqnarray*}
P_{T'}(P_T(X,W))=P_{T' \circ T}(X,W)\\
E_{T'}(E_T(\mu,\sigma))=E_{T' \circ T}(\mu,\sigma).
\end{eqnarray*}
We prove the proposition for $X$ in the pair $(X,W)$. The proof for $W$ and $(\mu,\sigma)$
is similar. Suppose $T=(\Phi,B,\eta)$ and $T'=(\Phi',B',\eta')$, and let us put
$\beta_t=\int_0^t{\eta(X_s)ds}$, $\beta'_t=\int_0^t{\eta'(\Phi_{\eta}(X_s))ds}$. Then we have that
\begin{eqnarray*}
P_{T'}(P_{T}(X))&=&\Phi'(H_{\eta'}(\Phi(H_{\eta}(X))))\\
&=&\Phi'(H_{\beta'}(\Phi(H_{\beta}(X))))\\
&=&\Phi' \circ \Phi(H_{\beta'}(H_{\beta}(X))).
\end{eqnarray*}
where we have used the definition $H_{\eta}:=H_{\beta}$ given in Section 2.2.
We want to calculate the composite random time change $H_{\beta'} \circ H_{\beta}$. Let $\alpha_t, \alpha'_t$ be the inverses of the processes $\beta_t,\beta'_t$. If $Y$ is any continuous process
$$H_{\beta'}(H_{\beta}(Y))_t= (H_{\beta}(Y))_{\alpha'_t}=Y_{\alpha_{\alpha'_t}}.$$
Since the inverse of $\alpha_{\alpha_t}$ is $\beta'_{\beta_t}$ we have that
$$H_{\beta'} \circ H_{\beta}=H_{\beta' \circ \beta}.$$
If we compute the density of the time change $\beta'_{\beta_t}$, we find
\begin{eqnarray*}
\beta'_{\beta_t}&=& \int_0^{\beta_t}{\eta'(\Phi_{\eta}(X)_s)ds}\\
&=&\int_0^t{H_{\alpha}(\eta'(\Phi_{\eta}(X))_s d\beta_s}\\
&=&\int_0^t{\eta'(\Phi(H_{\alpha}(H_{\beta}(X)))_s \eta(X_s)ds}\\
&=&\int_0^t{(\eta'\circ \Phi)(X_s)\eta(X_s)ds},
\end{eqnarray*}
and we have
$$H_{\beta'} \circ H_{\beta}(X)=H_{(\eta' \circ \Phi) \eta}(X).$$
Hence
$$P_{T'}(P_{T}(X))=\Phi' \circ \Phi(H_{\beta'}(H_{\beta}(X)))=\Phi' \circ \Phi(H_{(\eta' \circ \Phi) \eta}(X))=P_{T' \circ T}(X).$$
by definition of composition between stochastic transformations given in Section 3.1.
${}\hfill$\end{proof}

\section{Symmetries of a SDE}

\subsection{Finite symmetries}
In analogy with the usual distinction between  strong and  weak
solutions to a SDE we give the following

\begin{definition}\label{definition_symmetry_SDE}
A strong stochastic transformation $T \in SS_m(M)$ is  a strong (finite)
symmetry  of the SDE $(\mu,\sigma)$
if for any  solution $(X,W)$ to  $(\mu,\sigma)$ the transformed process $P_T(X,W)=(P_T(X),W)$
(Brownian motion is unchanged) is also
a solution to  $(\mu,\sigma)$. A stochastic transformation $T
\in S_m(M)$ is called a weak (finite) symmetry
of the SDE $(\mu,\sigma)$ if for any  solution $(X,W)$ to
$(\mu,\sigma)$, the generic transformed process $P_T(X,W):=(P_T(X),P_T(W))$
is also a solution to
$(\mu,\sigma)$ (Brownian motion is changed).
\end{definition}

\begin{theorem}\label{theorem_SDE3}
A strong stochastic transformation $T=(\Phi,B,\eta) \in SS_m(M)$ is a strong
symmetry of a SDE $(\mu,\sigma)$ if and only if
\begin{eqnarray}
L(\Phi) \circ \Phi^{-1}&=&\mu\label{equation_finite1}\\
(D(\Phi) \cdot \sigma )\circ \Phi^{-1}&=& \sigma.\label{equation_finite2}
\end{eqnarray}
A stochastic transformation $T \in S_m(M)$ is a weak symmetry of a SDE
$(\mu,\sigma)$ if and only if
\begin{eqnarray}
\left(\frac{1}{\eta} L(\Phi) \right) \circ \Phi^{-1}&=&\mu \label{equation_finite3}\\
\left( \frac{1}{\sqrt{\eta}} D(\Phi) \cdot \sigma \cdot B^{-1}\right) \circ \Phi^{-1}&=&\sigma.\label{equation_finite4}
\end{eqnarray}
\end{theorem}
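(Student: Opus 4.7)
The key observation is that, by Theorem \ref{theorem_transformation}, the transformed process $P_T(X,W)$ automatically solves $E_T(\mu,\sigma)$, so the symmetry condition in Definition \ref{definition_symmetry_SDE} is equivalent to asking that $P_T(X,W)$ also solve the \emph{original} SDE $(\mu,\sigma)$. The whole theorem therefore reduces to the assertion that $T$ is a (weak) symmetry of $(\mu,\sigma)$ iff $E_T(\mu,\sigma)=(\mu,\sigma)$, at which point equations \refeqn{equation_finite3}--\refeqn{equation_finite4} are nothing but Definition \ref{SDEtrasformation} unpacked. The strong statement is then the specialization to $B=I_m$, $\eta=1$, which collapses $E_T$ to $\mu\mapsto L(\Phi)\circ\Phi^{-1}$ and $\sigma\mapsto(D(\Phi)\cdot\sigma)\circ\Phi^{-1}$, recovering \refeqn{equation_finite1}--\refeqn{equation_finite2}.

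Direction $(\Leftarrow)$ is immediate. If the stated equations hold then $E_T(\mu,\sigma)=(\mu,\sigma)$ by Definition \ref{SDEtrasformation}, and Theorem \ref{theorem_transformation} produces from any solution $(X,W)$ of $(\mu,\sigma)$ a solution $P_T(X,W)$ of $(\mu,\sigma)$, which is exactly the symmetry requirement.

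For direction $(\Rightarrow)$, fix any solution $(X,W)$ of $(\mu,\sigma)$. Theorem \ref{theorem_transformation} together with the symmetry hypothesis provide two Itô decompositions of the single semimartingale $P_T(X)$, driven by the same Brownian motion $P_T(W)$: one with coefficients $(\mu,\sigma)$ and one with coefficients $(E_T(\mu),E_T(\sigma))$. Subtracting yields
\[
\int_0^{\cdot}\!\bigl(\mu-E_T(\mu)\bigr)(P_T(X)_s)\,ds+\int_0^{\cdot}\!\bigl(\sigma-E_T(\sigma)\bigr)(P_T(X)_s)\cdot dP_T(W)_s\equiv 0.
\]
Uniqueness of the semimartingale decomposition forces the finite-variation and local-martingale parts to vanish separately. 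For the martingale part, Lemma \ref{theorem_quadratic_variation} combined with $d[P_T(W)^{\alpha},P_T(W)^{\beta}]_t=\delta^{\alpha\beta}\,dt$ gives $\bigl((\sigma-E_T(\sigma))(\sigma-E_T(\sigma))^T\bigr)(P_T(X)_s)=0$, hence $\sigma=E_T(\sigma)$ along the trajectory; the drift part gives $\mu=E_T(\mu)$ analogously. Ranging over weak solutions starting at arbitrary points of $M$, these equalities extend pointwise to $M'$, and Definition \ref{SDEtrasformation} identifies them with \refeqn{equation_finite3}--\refeqn{equation_finite4}.

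The main obstacle is precisely this final passage in $(\Rightarrow)$: turning trajectory-wise equality into pointwise equality of the coefficients requires enough weak solutions of $(\mu,\sigma)$ to visit every point of $M$, i.e.\ a weak-existence assumption that should be made explicit (it is implicit in Definition \ref{definition_symmetry_SDE} through the universal quantifier over solutions). Once this is granted, the rest of the proof is forced by Theorem \ref{theorem_transformation}, the uniqueness of the Doob-Meyer decomposition, and the algebraic reading of $E_T$ from Definition \ref{SDEtrasformation}.
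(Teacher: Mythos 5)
Your proof is correct and follows essentially the same route as the paper's: reduce the symmetry condition to $E_T(\mu,\sigma)=(\mu,\sigma)$ via Theorem \ref{theorem_transformation}, and in the forward direction compare the two It\^o decompositions of $P_T(X)$ and kill the martingale and drift parts using Lemma \ref{theorem_quadratic_variation}. The weak-existence point you flag as the ``main obstacle'' is exactly how the paper closes the argument: since the coefficients are smooth, it cites the existence of a local weak solution started at any $x_0\in M$ (Ikeda--Watanabe, Ch.~IV, Thm.~2.3) and evaluates the pathwise identities at $t=0$, so this is a standard citation rather than a genuine gap.
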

\begin{proof}
We prove the proposition for weak symmetries. For strong symmetries the proof is similar.\\
If a stochastic transformation $T$ satisfies equations \refeqn{equation_finite3} and \refeqn{equation_finite4} then $E_T(\mu,\sigma)=(\mu,\sigma)$. We have to prove that $T$ is a  symmetry of the SDE $(\mu,\sigma)$.
Let $(X,W)$ be a solution to  $(\mu,\sigma)$:   Theorem \ref{theorem_transformation} ensures that $P_T(X,W)$ is a solution to $E_T(\mu,\sigma)=(\mu,\sigma)$.\\
Conversely, if for any solution $(X,W)$  to $(\mu,\sigma)$ also $P_T(X,W)$ is a solution to $(\mu,\sigma)$, since the coefficients $(\mu,\sigma)$ are smooth on $M$, for any $x_0 \in M$ there exists a solution $(X^{x_0},W^{x_0})$ (until the stopping time $\tau^{x_0}$ with $\mathbb{P}(\tau^{x_0}>0)=1$) such that $\mathbb{P}(X_0=x_0)=1$ (see Chapter IV, Theorem 2.3 in \cite{Watanabe}). Moreover, being $T \in T_m(M)$  a symmetry of the SDE $(\mu,\sigma)$, also $P_T(X^{x_0},W^{x_0})$ is a solution to the SDE $(\mu,\sigma)$ and, by Theorem \ref{theorem_transformation}, $P_T(X^{x_0},W^{x_0})$ is also a solution to $E_T(\mu,\sigma)$. This means that, for any $t \in \mathbb{R}$, the following equations hold
\begin{eqnarray*}
P_T(X^{x_0})^i_{t \wedge \tau^{x_0}}-P_T(X^{x_0})^i_0&=&\int_0^{t \wedge \tau^{x_0}}{\mu^i(P_T(X^{x_0})_{s})ds}\\
&&+\sum_{\alpha}\int_0^{t \wedge \tau^{x_0}}{\sigma^i_{\alpha}(P_T(X^{x_0})_{s})dP_T(W^{x_0})^{\alpha}_s}.\\
P_T(X^{x_0})^i_{t \wedge \tau^{x_0}}-P_T(X^{x_0})^i_0&=&\int_0^{t \wedge \tau^{x_0}}{E_T(\mu)^i(P_T(X^{x_0})_{s})ds}\\
&&+\sum_{\alpha}\int_0^{t \wedge \tau^{x_0}}{E_T(\sigma)^i_{\alpha}(P_T(X^{x_0})_{s})dP_T(W^{x_0})^{\alpha}_s}.\\
\end{eqnarray*}
This means that the process
\begin{eqnarray*}
Z^i_t&=&\int_0^{t \wedge \tau^{x_0}}{(E_T(\mu)^i(P_T(X^{x_0})_{s})-\mu^i(P_T(X^{x_0})_{s}))ds}\\
&&+\sum_{\alpha}\int_0^{t \wedge \tau^{x_0}}{(E_T(\sigma)_{\alpha}^i(P_T(X^{x_0})_{s})-\sigma^i_{\alpha}(P_T(X^{x_0})_{s}))dP_T(W^{x_0})^{\alpha}_s}
\end{eqnarray*}
is identically zero. As a consequence, also the quadratic variation $[Z^i,Z^i]$ is identically zero, and, by Theorem \ref{theorem_quadratic_variation}, we obtain
$$0=\sum_{\alpha}\int_0^{t \wedge \tau^{x_0}}{(E_T(\sigma)^i_{\alpha}(P_T(X^{x_0})_{s})-\sigma^i_{\alpha}(P_T(X^{x_0})_{s}))^2dt}.$$
The previous reasoning implies also that
$$\int_0^{t \wedge \tau^{x_0}}{(E_T(\mu)^i(P_T(X^{x_0})_{s})-\mu^i(P_T(X^{x_0})_{s}))^2ds}=0.$$
Since $P_T(X)_s$ is a continuous process and $E_T(\mu),E_T(\sigma),\mu,\sigma$ are smooth functions, we necessarily have that
\begin{eqnarray*}
E_T(\mu)^i(P_T(X^{x_0})_{t \wedge \tau^{x_0}})-\mu^i(P_T(X^{x_0})_{t \wedge \tau^{x_0}})&=&0\\
(E_T(\sigma)^i_{\alpha}(P_T(X^{x_0})_{t \wedge \tau^{x_0}})-\sigma^i_{\alpha}(P_T(X^{x_0})_{t \wedge \tau^{x_0}}))^2&=&0.
\end{eqnarray*}
Taking $t=0$ and recalling that $\mathbb{P}(X_0=x_0)=\mathbb{P}(P_T(X)_0=\Phi(x_0))=1$ (being $P_T(X)_0=\Phi(X_0)$), we finally have that
\begin{eqnarray*}
E_T(\mu)^i(\Phi(x_0))&=&\mu^i(\Phi(x_0)) \\
E_T(\sigma)^i_{\alpha}(\Phi(x_0))&=&\sigma^i_{\alpha}(\Phi(x_0)).
\end{eqnarray*}
Since $x_0$ is a generic point of $M$ and $\Phi$ is a generic diffeomorphism we obtain $E_T(\mu,\sigma)=(\mu,\sigma)$, that is equivalent to equations \refeqn{equation_finite3} and \refeqn{equation_finite4}.
${}\hfill$\end{proof} \\

\subsection{Infinitesimal symmetries }

\begin{definition}\label{definition_symmetry_infinitesimal_SDE}
An infinitesimal stochastic transformation $V$ generating a one
parameter group $T_a$ is called  strong (or  weak) infinitesimal
symmetry of the SDE $(\mu,\sigma)$ if $T_a$ is a finite strong
(or weak) symmetry of the SDE $(\mu,\sigma)$.
\end{definition}

The following theorem provides the {\it determining equations} for the infinitesimal symmetries of a SDE.
They differ from those given in \cite{Gaeta1} and in \cite{Grigoriev} for the presence of the antisymmetric matrix $C$ and the smooth function $\tau$. \\
In order to avoid the use of many indices we introduce the following notation: if $A:M \rightarrow \mathbb{R}^n$ and $B:M \rightarrow \matr(n,m)$,
we denote by $[A,B]$ the smooth function $[A,B]:M \rightarrow
\matr(n,m)$ such that
$$[A,B]^i_j=A^k\partial_k(B^i_j)-B^k_j\partial_k(A^i),$$
where we use Einstein summation convention.\\
The bracket $[\cdot,\cdot]$ satisfies the following  properties:
$$[A,[C,B]]=[[A,C],B]+[C,[A,B]]$$
$$[A,B \cdot D]=[A,B] \cdot D+ B \cdot A(D).$$
In the particular case $B:M \rightarrow \mathbb{R}^n$, the expression of $[A,B]$ coincides with the usual Lie bracket between the vector fields $A,B$.

\begin{theorem}\label{theorem_infinitesimal_symmetry1}
Let $V=(Y,C,\tau)$ be an infinitesimal stochastic transformation. Then $V$
is an infinitesimal  symmetry of the SDE $(\mu,\sigma)$ if and only if
$Y$ generates a one parameter group on $M$ and
\begin{eqnarray}
Y(\mu)-L(Y)+\tau \mu&=&0 \label{equation_symmetry1.1}\\
\left[ Y,\sigma \right]+\frac{1}{2} \tau \sigma+\sigma \cdot C&=&0. \label{equation_symmetry1.2}
\end{eqnarray}
\end{theorem}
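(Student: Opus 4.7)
The strategy is to apply Theorem~\ref{theorem_SDE3} to the one-parameter group $T_a=(\Phi_a,B_a,\eta_a)$ that $V=(Y,C,\tau)$ generates through the ODEs~\eqref{equation_infinitesimal_SDE2} (the completeness of $Y$ on $M$ being exactly what ensures these produce a bona fide element of $S_m(M)$ for every $a$). After clearing denominators in~\eqref{equation_finite3}--\eqref{equation_finite4} and composing with $\Phi_a$, the finite weak symmetry condition for $T_a$ reads
$$L(\Phi_a)=\eta_a\,(\mu\circ\Phi_a),\qquad D(\Phi_a)\cdot\sigma=\sqrt{\eta_a}\,(\sigma\circ\Phi_a)\cdot B_a. \qquad (\star)$$
It therefore suffices to show that $(\star)$ holds for every $a$ if and only if~\eqref{equation_symmetry1.1}--\eqref{equation_symmetry1.2} hold.

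Necessity comes from differentiating $(\star)$ at $a=0$ using $\Phi_0=\mathrm{id}_M$, $B_0=I_m$, $\eta_0=1$ and~\eqref{equation_infinitesimal_SDE1}. The first identity gives $L(Y)=\tau\mu+Y(\mu)$, which is~\eqref{equation_symmetry1.1}. For the second, noting $\partial_a\sqrt{\eta_a}|_{a=0}=\tfrac12\tau$ produces $D(Y)\cdot\sigma=\tfrac12\tau\,\sigma+Y(\sigma)+\sigma\cdot C$, where $Y(\sigma)$ denotes componentwise differentiation along $Y$. The nontrivial step is the identification, from the bracket definition given just before the statement,
$[Y,\sigma]^i_j=Y^k\partial_k\sigma^i_j-\sigma^k_j\partial_k Y^i=Y(\sigma)^i_j-(D(Y)\cdot\sigma)^i_j$, which converts the previous identity into~\eqref{equation_symmetry1.2}.

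For sufficiency, Theorem~\ref{proposition_transformation} gives $E_{T_{a+h}}(\mu,\sigma)=E_{T_h}(E_{T_a}(\mu,\sigma))$, whence
$$\partial_a E_{T_a}(\mu,\sigma)=\partial_h E_{T_h}\!\bigl(E_{T_a}(\mu,\sigma)\bigr)\big|_{h=0};$$
this expresses the velocity of $E_{T_a}(\mu,\sigma)$ as the infinitesimal $V$-action evaluated at the current SDE. Reading the necessity computation in reverse, this infinitesimal action at a generic SDE $(\mu',\sigma')$ is (up to sign) the pair $(Y(\mu')-L'(Y)+\tau\mu',\,[Y,\sigma']+\tfrac12\tau\sigma'+\sigma'\cdot C)$, with $L'$ the generator of $(\mu',\sigma')$. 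Hypothesis~\eqref{equation_symmetry1.1}--\eqref{equation_symmetry1.2} makes this vanish at $(\mu',\sigma')=(\mu,\sigma)$, so the constant path is a solution of the resulting ODE in SDE-coefficient space with the correct initial datum; uniqueness then forces $E_{T_a}(\mu,\sigma)\equiv(\mu,\sigma)$, and Theorem~\ref{theorem_SDE3} identifies each $T_a$ as a weak symmetry. The main obstacle is twofold: first, the bracket identification in the necessity computation, which is where the paper's generalized $[\cdot,\cdot]$ enters decisively; second, organising the sufficiency argument as a fixed-point / uniqueness statement for the $S_m(M)$-action on the space of SDEs, rather than attempting to propagate $(\star)$ by direct differentiation at a general $a$ (which does not close up cleanly because $L$ contains second-order derivatives that do not commute with composition by $\Phi_a$).
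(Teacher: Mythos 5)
Your proof follows essentially the same route as the paper's: necessity by differentiating the finite conditions of Theorem \ref{theorem_SDE3} at $a=0$ (your bracket identification $[Y,\sigma]=Y(\sigma)-D(Y)\cdot\sigma$ is exactly the point of the paper's generalized bracket), and sufficiency by exhibiting $a\mapsto E_{T_a}(\mu,\sigma)$ as a solution of an evolution equation whose velocity field vanishes at $(\mu,\sigma)$ under \refeqn{equation_symmetry1.1}--\refeqn{equation_symmetry1.2}, so that the constant path wins by uniqueness. Your derivation of the evolution equation from the semigroup identity $E_{T_{a+h}}=E_{T_h}\circ E_{T_a}$ is a clean variant of the paper's direct differentiation of the explicit formulas \refeqn{equation_symmetry5}--\refeqn{equation_symmetry6}, and the two are equivalent. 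The one step you leave under-justified is precisely the uniqueness claim: the generator of your flow on ``SDE-coefficient space'' is not a pointwise (finite-dimensional) vector field, since it contains the transport terms $Y(\mu')$ and $Y(\sigma')$, so Picard--Lindel\"of does not apply directly to the path $a\mapsto(\mu_a,\sigma_a)$. The paper's substitution $\tilde\mu_a=\mu_a\circ\Phi_a$, $\tilde\sigma_a=\sigma_a\circ\Phi_a$ is exactly the method of characteristics that absorbs the transport term and reduces the system, for each fixed $x$, to a genuine ODE in $a$ with algebraic right-hand side, for which uniqueness is legitimate. Your closing parenthetical dismisses this composition as ``not closing up cleanly'' because of the second-order derivatives in $L$, but in fact it does close up: after composing with $\Phi_a$ the second-order contribution survives only as the term $A(\tilde\sigma_a,Y)$, which is algebraic (quadratic) in $\tilde\sigma_a$ with coefficients given by the known function $\partial_{hk}Y^i$ evaluated along the flow. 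So the step you chose to skip is the one that makes your uniqueness assertion rigorous; with it restored, your argument coincides with the paper's.
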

\begin{proof}
Let $V$ be an infinitesimal symmetry of $(\mu,\sigma)$ and let
$T_a=(\Phi_a,B_a,\eta_a)$ be the one-parameter group generated by
$V$. By Theorem \ref{theorem_SDE3}, we have that
\begin{eqnarray*}
\left(\frac{1}{\eta_a} L(\Phi_a) \right) \circ \Phi_{-a}&=&\mu\\
\left( \frac{1}{\sqrt{\eta_a}} D(\Phi_a) \cdot \sigma \cdot B_a^{-1}\right) \circ \Phi_{-a}&=&\sigma.
\end{eqnarray*}
If we compute the derivatives with respect to $a$ of the previous expressions
and take $a=0$ we obtain equations \refeqn{equation_symmetry1.1} and \refeqn{equation_symmetry1.2}.\\
Conversely suppose that equations \refeqn{equation_symmetry1.1} and \refeqn{equation_symmetry1.2} hold. If we define  $\mu_a$ and  $\sigma_a$ as
\begin{eqnarray}
\mu_a&=&\left(\frac{1}{\eta_a} L(\Phi_a) \right) \circ \Phi_{-a} \label{equation_symmetry5}\\
\sigma_a&=&\left( \frac{1}{\sqrt{\eta_a}} D(\Phi_a) \cdot \sigma \cdot B_a^{-1}\right) \circ \Phi_{-a},\label{equation_symmetry6}
\end{eqnarray}
the functions $\mu_a,\sigma_a$ solve the following first order partial differential equations
\begin{eqnarray}
\partial_a(\mu_a)&=&[Y,\mu_a]+A(\sigma_a,Y)+\tau \mu_a \label{equation_symmetry7}\\
\partial_a(\sigma_a)&=&\left[ Y,\sigma_a \right]+\frac{1}{2} \tau \sigma_a+\sigma_a \cdot C, \label{equation_symmetry8}
\end{eqnarray}
where
$$A(\sigma_a,Y)^i=\sum_{\alpha} \sigma^k_{\alpha,a}\sigma^h_{\alpha,a}\partial_{hk}(Y^i).$$

 If we consider
$\tilde{\sigma}_a=\sigma_a \circ \Phi_a$ and
$\tilde{\mu}_a=\mu_a \circ \Phi_a$,  equations
\refeqn{equation_symmetry7} and \refeqn{equation_symmetry8} become
\begin{eqnarray*}
\partial_a(\tilde{\mu}_a)&=&-\tilde{\mu}_a(Y)+A(\tilde{\sigma}_a,Y)+\tau \tilde{\mu}_a\\
\partial_a(\tilde{\sigma}_a)&=&-\sigma_a(Y)+\frac{1}{2} \tau \tilde{\sigma}_a+\tilde{\sigma}_a \cdot C
\end{eqnarray*}
that are, for $x$ fixed,  ordinary differential
equations in $a$ admitting a unique solution
for any initial condition $(\mu_0, \sigma_0)$.
As a consequence, when
\begin{equation}\label{equation_symmetry4}\begin{array}{ccc}
[Y,\mu_0]+A(\sigma_0,Y)+\tau \mu_0&=&0\\
\left[ Y,\sigma_0 \right]+\frac{1}{2} \tau \sigma_0+\sigma_0 \cdot C&=&0,
\end{array}
\end{equation}
we  have $\sigma_a=\sigma_0$ and $\mu_a=\mu_0$ for any $a$ and
\refeqn{equation_symmetry5} and \refeqn{equation_symmetry6} ensure that $T_a$ is a symmetry of  $(\mu,\sigma)$.
${}\hfill$\end{proof}\\

\begin{definition}
An infinitesimal stochastic transformation $V \in V_m(M)$ is a general
infinitesimal symmetry of the SDE $(\mu,\sigma)$ if satisfies
the determining equations \refeqn{equation_symmetry1.1} and \refeqn{equation_symmetry1.2}.
\end{definition}
In order to prove that  the Lie bracket of two general infinitesimal symmetries of a SDE is  a general  infinitesimal symmetry
of the same SDE we need the following technical Lemma:

\begin{lemma}\label{lemmautile}
Given a general infinitesimal symmetry  $(Y,C,\tau)$  of the SDE $(\mu,\sigma)$,  for any smooth function $f \in \cinf(M)$ we have
$$Y(L(f))-L(Y(f))=-\tau L(f),$$
where $L$ is the second order differential operator associated  with  $(\mu,\sigma)$.
\end{lemma}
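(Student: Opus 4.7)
My plan is to compute the commutator $[Y,L](f) := Y(L(f)) - L(Y(f))$ directly in coordinates and show term-by-term that it reduces to $-\tau L(f)$, using the two determining equations \refeqn{equation_symmetry1.1} and \refeqn{equation_symmetry1.2} together with the antisymmetry of $C$. Writing $L = A^{ij}\partial_i\partial_j + \mu^i\partial_i$ with $A^{ij} = \tfrac{1}{2}\sum_\alpha \sigma^i_\alpha\sigma^j_\alpha$ and $Y = Y^k\partial_k$, a direct expansion shows that the third-order terms $Y^k A^{ij}\partial_k\partial_i\partial_j f$ and $A^{ij} Y^k \partial_i\partial_j\partial_k f$ cancel, and similarly the second-order cross terms coming from $\mu$ cancel. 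So $[Y,L]$ is a genuine second-order operator, and the task reduces to identifying its principal symbol and its first-order part.

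For the principal symbol, the coefficient of $\partial_i\partial_j f$ turns out to be
$$Y^k \partial_k A^{ij} - A^{kj}\partial_k Y^i - A^{ik}\partial_k Y^j.$$
The plan is to rewrite this using the determining equation $[Y,\sigma] = -\tfrac{1}{2}\tau\sigma - \sigma\cdot C$. Expanding $Y(A^{ij})$ via the Leibniz rule, one gets two copies of $\tfrac{1}{2}[Y,\sigma]^i_\alpha\sigma^j_\alpha$-type terms plus the two terms $A^{kj}\partial_k Y^i + A^{ik}\partial_k Y^j$ that will cancel against the last two summands. Substituting the determining equation produces a $-\tau A^{ij}$ term plus a residue of the form
$$-\tfrac{1}{2}\sum_{\alpha,\beta}\bigl(\sigma^i_\beta\sigma^j_\alpha + \sigma^i_\alpha\sigma^j_\beta\bigr)C^\beta_\alpha,$$
which vanishes because the parenthesis is symmetric in $(\alpha,\beta)$ while $C\in so(m)$ is antisymmetric. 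So the principal symbol of $[Y,L]$ equals $-\tau A^{ij}$, exactly matching the principal symbol of $-\tau L$.

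For the first-order part, the coefficient of $\partial_i f$ in $[Y,L]f$ collects to
$$Y^k\partial_k \mu^i - \mu^k\partial_k Y^i - A^{jk}\partial_j\partial_k Y^i.$$
Recognising $A^{jk}\partial_j\partial_k Y^i + \mu^k\partial_k Y^i = L(Y^i)$ reduces this expression to $Y(\mu^i) - L(Y^i)$, and the first determining equation \refeqn{equation_symmetry1.1} gives $Y(\mu^i) - L(Y^i) = -\tau\mu^i$. Combining the principal and first-order parts yields $[Y,L]f = -\tau A^{ij}\partial_i\partial_j f - \tau \mu^i\partial_i f = -\tau L(f)$, which is the claim.

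The main obstacle I anticipate is bookkeeping: correctly distributing the double derivatives in $L(Y(f))$ so that one cleanly sees the symmetric combination $\sigma^i_\alpha\sigma^j_\beta + \sigma^i_\beta\sigma^j_\alpha$ that is killed by the antisymmetric $C$. The algebraic content is small, but the index juggling is where one is most likely to lose a minus sign; packaging the computation via the bracket $[A,B]^i_j = A^k\partial_k B^i_j - B^k_j\partial_k A^i$ introduced just before the statement should help keep the expressions compact and exploit the identity $[Y,\sigma]^i_\alpha = Y^k\partial_k\sigma^i_\alpha - \sigma^k_\alpha\partial_k Y^i$ directly.
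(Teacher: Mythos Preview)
Your proposal is correct and follows essentially the same route as the paper's proof: split $[Y,L]$ into its second-order and first-order parts, use the first determining equation \refeqn{equation_symmetry1.1} directly for the first-order coefficient, and use \refeqn{equation_symmetry1.2} together with the antisymmetry of $C$ to show the principal symbol is $-\tau A^{ij}$. The only cosmetic difference is that the paper packages the principal-symbol computation as the matrix identity $Y(A)-D(Y)\cdot A-A\cdot D(Y)^T=-\tau A$ obtained by symmetrizing $[Y,\sigma]\cdot\sigma^T$, whereas you do the equivalent computation in index notation.
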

\begin{proof}
Given $Y=Y^i\partial_{i}$ and $L=A^{ij}\partial_{ij}+\mu^i \partial_{i}$, we can write
\begin{eqnarray*}
Y(L(f))-L(Y(f))&=&Y^i\partial_{i}(A^{jk}\partial_{jk}(f)+\mu^j \partial_{j}(f))\\
&&-(A^{jk}\partial_{jk}(Y^i \partial_{i}(f))+\mu^j \partial_{j}(Y^i \partial_{i}(f)))\\
&=&(Y^i\partial_{i}(A^{jk})-A^{ik}\partial_{i}(Y^j)-A^{ji}\partial_{i}(Y^k))\partial_{jk}(f)\\
&&+(Y^i\partial_{i}(\mu^j)-A^{ik}\partial_{ik}(Y^j)-\mu^i \partial_{i}(Y^j))\partial_{j}(f)
\end{eqnarray*}
and the thesis of the Lemma reads
\begin{eqnarray}
(Y^i\partial_{i}(A^{jk})-A^{ik}\partial_{i}(Y^j)-A^{ji}\partial_{i}(Y^k))&=&-\tau (A^{jk})\label{equation_commutator1}\\
(Y^i\partial_{i}(\mu^j)-A^{ik}\partial_{ik}(Y^j)-\mu^i \partial_{i}(Y^j))&=&-\tau \mu^j. \label{equation_commutator2}
\end{eqnarray}
Equation \refeqn{equation_commutator2} can be written in the following way
$$Y(\mu)-L(Y)=-\tau \mu$$
and, denoting by $A$  the symmetric matrix of component $A^{ij}$, equation \refeqn{equation_commutator1} can be rewritten as follows
\begin{equation}\label{equation_commutator3}
Y(A)-D(Y) \cdot A-A \cdot D(Y)^T=-\tau A.
\end{equation}
Since $(Y,C,\tau)$ is a symmetry of the SDE $(\mu,\sigma)$, by equation \refeqn{equation_symmetry1.2} we have
\begin{equation}\label{equation_commutator4}\begin{array}{lcl}
[Y,\sigma] \cdot \sigma^T&=&-\frac{1}{2} \tau \sigma \cdot \sigma^T- \sigma \cdot  C \cdot \sigma^T\\
&=&- \tau A - \sigma \cdot C \cdot \sigma^T.
\end{array}
\end{equation}
If we sum equation \refeqn{equation_commutator4} with its transposed, being  $C$ an antisymmetric matrix, we obtain
\begin{equation}\label{equation_commutator5}\begin{array}{lcl}
[Y,\sigma] \cdot \sigma^T+([Y,\sigma] \cdot \sigma^T)^T&=&-2 \tau A -\sigma \cdot C \cdot \sigma^T-\sigma \cdot C^T\cdot \sigma\\
&=&-2 \tau A.
\end{array}
\end{equation}
Furthermore, since for any function $F$
$$[Y,F]=Y(F)-D(Y) \cdot F,$$
we have that
\begin{equation}\label{equation_commutator6}\begin{array}{lcl}
[Y,\sigma] \cdot \sigma^T+([Y,\sigma] \cdot \sigma^T)^T&=& -2(D(Y) \cdot A)+Y(\sigma) \cdot \sigma^T+\\
&&- 2(D(Y) \cdot A)^T+\sigma \cdot Y(\sigma)^T\\
&=&2(Y(A)-D(Y) \cdot A-A \cdot D(Y)^T).
\end{array}
\end{equation}
Using equations \refeqn{equation_commutator5} and \refeqn{equation_commutator6}  we obtain \refeqn{equation_commutator3}.
${}\hfill$\end{proof}

\begin{proposition}\label{theorem_infinitesimal_symmetry2}
Let $V_1=(Y_1,C_1,\tau_1),V_2=(Y_2,C_2,\tau_2) \in V_m(M)$ be two
general infinitesimal symmetries of the SDE $(\mu,\sigma)$, then
$[V_1,V_2]$ is a general infinitesimal symmetry of $(\mu,\sigma)$.
\end{proposition}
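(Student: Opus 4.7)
The plan is to set $(Y,C,\tau) := [V_1, V_2]$, so that by formula \refeqn{equation_infinitesimal_SDE5} we have
$$Y = [Y_1, Y_2], \quad C = Y_1(C_2) - Y_2(C_1) - \{C_1, C_2\}, \quad \tau = Y_1(\tau_2) - Y_2(\tau_1),$$
and then to verify that $(Y, C, \tau)$ satisfies the two determining equations of Theorem \ref{theorem_infinitesimal_symmetry1}.

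For the drift equation $Y(\mu) - L(Y) + \tau \mu = 0$, I would start from $[Y_1, Y_2](\mu) = Y_1(Y_2(\mu)) - Y_2(Y_1(\mu))$ and substitute $Y_i(\mu) = L(Y_i) - \tau_i \mu$ taken from the hypothesis on $V_i$, expanding using the ordinary Leibniz rule $Y_i(\tau_j \mu) = Y_i(\tau_j)\mu + \tau_j Y_i(\mu)$. The key step is then Lemma \ref{lemmautile}, applied componentwise with $f = Y_j^k$, which yields $Y_i(L(Y_j)) = L(Y_i(Y_j)) - \tau_i L(Y_j)$. The two $\tau_1 \tau_2 \mu$ contributions cancel each other, the crossed $\tau_i L(Y_j)$ terms cancel after pairing, and the combination $L(Y_1(Y_2)) - L(Y_2(Y_1)) = L([Y_1,Y_2]) = L(Y)$ emerges. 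What is left is exactly $L(Y) - (Y_1(\tau_2) - Y_2(\tau_1))\mu = L(Y) - \tau \mu$, i.e.\ the first determining equation for $(Y,C,\tau)$.

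For the diffusion equation $[Y, \sigma] + \tfrac{1}{2}\tau \sigma + \sigma \cdot C = 0$, the tool is the Jacobi-type identity $[A,[C,B]] = [[A,C],B] + [C,[A,B]]$ stated in Section~4.2, which rearranges to
$$[[Y_1, Y_2], \sigma] = [Y_1, [Y_2, \sigma]] - [Y_2, [Y_1, \sigma]].$$
I would replace each inner bracket by $-\tfrac{1}{2}\tau_i \sigma - \sigma \cdot C_i$ using the hypothesis, and expand the outer brackets using the two Leibniz-type rules already recorded in the text: $[Y_1, \sigma \cdot C_2] = [Y_1, \sigma] \cdot C_2 + \sigma \cdot Y_1(C_2)$ for matrix products, together with the direct consequence $[Y_1, \tau_2 \sigma] = Y_1(\tau_2)\sigma + \tau_2 [Y_1, \sigma]$ for a scalar multiplier. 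After performing the analogous computation for the $Y_2$ term and subtracting, the $\tfrac{1}{4}\tau_1\tau_2 \sigma$ contributions cancel, the $\tfrac{1}{2}\tau_i \sigma \cdot C_j$ terms cancel symmetrically, the paired $\sigma \cdot C_i C_j$ terms combine into $\sigma \cdot \{C_1,C_2\}$, and the remainder is $-\tfrac{1}{2}\tau\,\sigma + \sigma \cdot \{C_1,C_2\} - \sigma \cdot (Y_1(C_2) - Y_2(C_1))$, which by the definition of $C$ equals $-\tfrac{1}{2}\tau \sigma - \sigma \cdot C$.

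The main obstacle is the meticulous algebraic bookkeeping in the diffusion computation: one must handle the scalar and matrix Leibniz rules in parallel, keep the signs right in the commutator $\{C_1,C_2\} = C_1 C_2 - C_2 C_1$, and check that every quadratic cross-term in the $\tau_i$'s and $C_i$'s cancels against its symmetric counterpart so that only the genuinely bracket-like expressions survive. By contrast, the drift calculation is a short formal manipulation once Lemma \ref{lemmautile} is available to commute $Y_i$ past the second-order operator $L$.
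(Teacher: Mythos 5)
Your proposal is correct and follows essentially the same route as the paper: the drift condition is verified by expanding $[Y_1,Y_2](\mu)$, substituting $Y_i(\mu)=L(Y_i)-\tau_i\mu$ and invoking Lemma \ref{lemmautile} componentwise, while the diffusion condition is handled via the Jacobi-type identity $[[Y_1,Y_2],\sigma]=[Y_1,[Y_2,\sigma]]-[Y_2,[Y_1,\sigma]]$ together with the scalar and matrix Leibniz rules, exactly as in the text. Your bookkeeping of the cancellations (the $\tau_1\tau_2$ and crossed $\tau_i\,\sigma\cdot C_j$ terms, and the recombination into $\sigma\cdot\{C_1,C_2\}$) matches the paper's final computation, including the correct sign on $\sigma\cdot Y_2(C_1)$.
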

\begin{proof}
We start by proving that condition  \refeqn{equation_symmetry1.1} holds for $[V_1,V_2]$ defined by equation \refeqn{equation_infinitesimal_SDE5}, i.e.
$$
[Y_1,Y_2](\mu)-L([Y_1,Y_2])+(Y_1(\tau_2)-Y_2(\tau_1))\mu=0
$$
If we rewrite the left-hand side of the previous equation as
\begin{eqnarray*}
& &Y_1Y_2(\mu)-Y_2Y_1(\mu)-L([Y_1,Y_2])+Y_1(\tau_2\mu)-\tau_2Y_1(\mu)-Y_2(\tau_1\mu)+\tau_1Y_2(\mu) \\
&=&Y_1(Y_2(\mu)+\tau_2\mu)-Y_2(Y_1(\mu)+\tau_1\mu)-L([Y_1,Y_2])-\tau_2Y_1(\mu)+\tau_1Y_2(\mu)\\
&=&Y_1(L(Y_2))-Y_2(L(Y_1))- L([Y_1,Y_2])-\tau_2Y_1(\mu)+\tau_1Y_2(\mu)\\
\end{eqnarray*}
and we use Lemma \ref{lemmautile}, we get
\begin{eqnarray*}
& & [Y_1,Y_2](\mu)-L([Y_1,Y_2])+(Y_1(\tau_2)-Y_2(\tau_1))\mu\\
& &\tau_1(Y_2(\mu)-L(Y_2))- \tau_2(Y_1(\mu)-L(Y_1))  \\
&=&-\tau_1\tau_2\mu+\tau_2\tau_1\mu =0.\\
\end{eqnarray*}
Moreover  we have to prove that also  condition  \refeqn{equation_symmetry1.2} holds for $[V_1,V_2]$, i.e.
$$
[[Y_1,Y_2],\sigma] +\frac{1}{2}(Y_1(\tau_2)-Y_2(\tau_1))\sigma-\sigma \cdot \{C_1,C_2\}+\sigma \cdot  Y_1(C_2)+\sigma \cdot Y_2(C_1)=0.
$$
By using the properties of the Lie bracket we have
\begin{eqnarray*}
[[Y_1,Y_2],\sigma]&=&[Y_1,[Y_2,\sigma]]-[Y_2,[Y_1,\sigma]]\\
&=&-[Y_1,\frac{1}{2}\tau_2 \sigma+\sigma \cdot C_2]+[Y_2,\frac{1}{2}\tau_1 \sigma+\sigma \cdot C_1]\\
&=&-\frac{1}{2}Y_1(\tau_2)\sigma-\frac{1}{2}\tau_2 [Y_1,\sigma]-[Y_1,\sigma] \cdot C_2-\sigma \cdot Y_1(C_2)\\
&&+\frac{1}{2}Y_2(\tau_1)\sigma+\frac{1}{2}\tau_1 [Y_2,\sigma]+[Y_2,\sigma]\cdot C_1+\sigma \cdot Y_2(C_1)\\
&=&-\frac{1}{2}(Y_1(\tau_2)-Y_2(\tau_1))\sigma+\sigma \cdot \{C_1,C_2\}-\sigma \cdot  Y_1(C_2)+\sigma \cdot Y_2(C_1),
\end{eqnarray*}
and this conclude the proof.
${}\hfill$\end{proof}

\begin{proposition}\label{proposition_infinitesimal_symmetry}
Let $V \in V_m(M)$ be an infinitesimal symmetry of the SDE
$(\mu,\sigma)$ and let $T \in S_m(M,M')$ be a stochastic transformation.
Then $T_*(V)$ is an infinitesimal symmetry of $E_T(\mu,\sigma)$.
\end{proposition}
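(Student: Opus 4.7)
The plan is to avoid computing with the formula \refeqn{equation_infinitesimal_SDE4} for $T_*(V)$ directly, and instead to exploit the composition property $E_{T'} \circ E_T = E_{T' \circ T}$ from Theorem \ref{proposition_transformation} together with the fact that push-forward on $V_m(M)$ corresponds to conjugation of flows on $S_m(M)$.

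Concretely, let $T_a$ be the one-parameter group in $S_m(M)$ generated by $V$. Under the identification $S_m(M,M') \cong \Iso(M \times G, M' \times G)$ and of $V_m(M)$ with (right $G$-invariant) vector fields on $M \times G$ described in Section 3.1, the push-forward of a vector field along the isomorphism $F_T$ corresponds to conjugating its flow. Hence I would argue that the one-parameter group generated by $T_*(V)$ is $T \circ T_a \circ T^{-1} \in S_m(M')$.

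Granted this identification, I would finish by applying Theorem \ref{proposition_transformation} twice to compute
$$E_{T \circ T_a \circ T^{-1}}\bigl(E_T(\mu,\sigma)\bigr) = E_T\bigl(E_{T_a}(E_{T^{-1}}(E_T(\mu,\sigma)))\bigr) = E_T\bigl(E_{T_a}(\mu,\sigma)\bigr) = E_T(\mu,\sigma),$$
where the middle equality uses $E_{T^{-1}} \circ E_T = E_{T^{-1} \circ T} = \mathrm{id}$, and the last uses that $E_{T_a}(\mu,\sigma)=(\mu,\sigma)$ for every $a$ because $V$ is an infinitesimal symmetry (Definition \ref{definition_symmetry_infinitesimal_SDE} together with Theorem \ref{theorem_SDE3}). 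This exhibits $T \circ T_a \circ T^{-1}$ as a one-parameter group of finite symmetries of $E_T(\mu,\sigma)$, so by Definition \ref{definition_symmetry_infinitesimal_SDE} its generator $T_*(V)$ is an infinitesimal symmetry of $E_T(\mu,\sigma)$.

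The delicate step is the intermediate claim that push-forward of elements of $V_m(M)$ equals conjugation of the associated one-parameter groups in $S_m(M)$; this is standard for vector fields on a manifold and here reduces to that case through the embedding of $V_m(M)$ into the vector fields on $M \times G$ together with equations \refeqn{equation_infinitesimal_SDE1}--\refeqn{equation_infinitesimal_SDE2}. An alternative, less efficient route would be to substitute formula \refeqn{equation_infinitesimal_SDE4} and the transformed coefficients from Definition \ref{SDEtrasformation} into the determining equations \refeqn{equation_symmetry1.1}--\refeqn{equation_symmetry1.2} and verify them by a brute-force computation; the geometric argument above is significantly cleaner.
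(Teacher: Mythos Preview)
Your argument is correct and follows essentially the same route as the paper: both proofs hinge on the observation that $T_*(V)$ generates the conjugated flow $T\circ T_a\circ T^{-1}$ and then invoke the composition property of Theorem \ref{proposition_transformation} to conclude that this flow is a symmetry of $E_T(\mu,\sigma)$. The only cosmetic difference is that you verify the symmetry condition via the $E$-side of Theorem \ref{proposition_transformation} (showing $E_{T\circ T_a\circ T^{-1}}\bigl(E_T(\mu,\sigma)\bigr)=E_T(\mu,\sigma)$ directly), whereas the paper verifies it via the $P$-side by tracking an arbitrary solution $(X',W')$ of $E_T(\mu,\sigma)$ through $P_{T^{-1}}$, $P_{T_a}$, and $P_T$.
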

\begin{proof}
Given a solution $(X',W')$ to $E_T(\mu,\sigma)$, Theorem
\ref{theorem_SDE3} ensure that $(X,W)=P_T^{-1}(X',W')$ is a solution to
$(\mu,\sigma)$. If $T_a$ denotes the one-parameter group
generated by the infinitesimal symmetry $V$ then $P_{T_a}(X,W)$ is a solution to $(\mu,\sigma)$. By
Theorem \ref{theorem_SDE3}, $P_T(P_{T_a}(X,W))$ is a solution to the SDE
$E_T(\mu,\sigma)$ and, by Theorem
\ref{proposition_transformation}, for any $(X',W')$
solution to $E_T(\mu,\sigma)$, the process  $P_{(T \circ T_a \circ T^{-1})}(X',W')$ is
a solution to $E_T(\mu,\sigma)$. Since  the generator of $T \circ T_a
\circ T^{-1}$ is $T_*(V)$ we conclude that  $T_*(V)$ is an infinitesimal
symmetry of $E_T(\mu,\sigma)$.
${}\hfill$\end{proof}

\begin{theorem}\label{theorem_strong_symmetry}
Let $V_1=(Y_1,C_1,\tau_1),...,V_k=(Y_k,C_k,\tau_k)$ be general infinitesimal symmetries of $(\mu,\sigma)$.
 If $x_0 \in M$ is such that $Y_1(x_0),...,Y_k(x_0)$ are linearly independent,  then there exist a neighborhood $U$ of $x_0$ and a stochastic transformation $T \in S_m(U,U')$ such that $T_*(V_i)$ are strong infinitesimal symmetries of $E_T(\mu,\sigma)$.
\end{theorem}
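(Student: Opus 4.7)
The plan is to stitch together two results that have already been proved: the algebraic rectification of infinitesimal stochastic transformations provided by Theorem \ref{theorem_infinitesimal_SDE1}, and the push-forward compatibility of the symmetry condition provided by Proposition \ref{proposition_infinitesimal_symmetry}. The strategy in one line: produce a stochastic transformation that kills the $C_i$ and $\tau_i$ components (making each $V_i$ strong), and then observe that the symmetry property is preserved under push-forward and transport of the SDE.

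First I would apply Theorem \ref{theorem_infinitesimal_SDE1} to the family $V_1,\dots,V_k$. That theorem requires $K=\spann\{V_1,\dots,V_k\}$ to be a Lie subalgebra of $V_m(M)$, a hypothesis that is implicit here (or can be arranged by replacing the family by a basis of the finite-dimensional Lie algebra it generates among the symmetries, which by Proposition \ref{theorem_infinitesimal_symmetry2} still consists of infinitesimal symmetries; one then uses the linear independence of the $Y_i(x_0)$'s to retain the non-degeneracy condition after completing to a basis, shrinking $U$ if necessary). The conclusion yields a neighborhood $U$ of $x_0$ and a stochastic transformation $T=(\mathrm{id}_U,B,\eta)\in S_m(U)$ satisfying $Y_i(B)=-B\cdot C_i$ and $Y_i(\eta)=-\tau_i\eta$, such that each push-forward $T_*(V_i)$ is of the strong form $(Y_i,0,0)$ in $V_m(U)$.

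Second I would invoke Proposition \ref{proposition_infinitesimal_symmetry}. Each $V_i$ is by hypothesis a general infinitesimal symmetry of $(\mu,\sigma)$, so its push-forward $T_*(V_i)$ is an infinitesimal symmetry of the transformed SDE $E_T(\mu,\sigma)$ on $U'=U$. Combining the two steps, $T_*(V_i)=(Y_i,0,0)$ is both strong and an infinitesimal symmetry of $E_T(\mu,\sigma)$, hence a strong infinitesimal symmetry of $E_T(\mu,\sigma)$ for every $i=1,\dots,k$, which is the claim.

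The bulk of the technical work has therefore been frontloaded into Theorem \ref{theorem_infinitesimal_SDE1} (the local solvability of the overdetermined system $Y_i(B)=-B\cdot C_i$, $Y_i(\eta)=-\tau_i\eta$ via a Frobenius-type compatibility argument) and into Proposition \ref{proposition_infinitesimal_symmetry} (the naturality of the symmetry notion under stochastic transformations). The only subtle point in the present proof is the Lie-algebra step flagged above: one must be confident that the ambient family in which Theorem \ref{theorem_infinitesimal_SDE1} is applied is indeed a Lie algebra and that the linear independence of the $Y_i$'s at $x_0$ survives the completion to a basis. Everything else reduces to quoting the two earlier results in sequence.
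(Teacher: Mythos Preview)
Your proposal is correct and follows exactly the same approach as the paper, whose proof reads in its entirety: ``The theorem is an application of Theorem \ref{theorem_infinitesimal_SDE1} and Proposition \ref{proposition_infinitesimal_symmetry}.'' You are in fact more careful than the paper in flagging that Theorem \ref{theorem_infinitesimal_SDE1} requires $\spann\{V_1,\dots,V_k\}$ to be a Lie subalgebra, a hypothesis the paper leaves implicit; your remedy via Proposition \ref{theorem_infinitesimal_symmetry2} is the natural one.
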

\begin{proof}
The theorem is an application of Theorem \ref{theorem_infinitesimal_SDE1} and Proposition \ref{proposition_infinitesimal_symmetry}.
${}\hfill$\end{proof}

\section{Applications}

\subsection{A stochastic perturbation of a symmetric ODE}

Let us consider the following SDE
\begin{equation}\label{equation_example21}
\left(\begin{array}{c}
dX_t\\
dY_t \end{array}
\right)=\left(\begin{array}{c}
\frac{X_t}{X_t^2+Y_t^2}\\
\frac{Y_t}{X_t^2+Y_t^2} \end{array}
\right)dt+\left(\begin{array}{cc}
1 & 0\\
0 & 1\end{array}
\right)\cdot \left(\begin{array}{c}
dW^1_t\\
dW^2_t \end{array}
\right)
\end{equation}
and  the infinitesimal stochastic transformations $(Y,C,\tau)$  of the form
\begin{equation}\label{stoch_transf_2dim}
Y=\left(\begin{array}{c}
f(x,y)\\
g(x,y)\end{array}
\right),C=\left(\begin{array}{cc}
0 & c(x,y)\\
-c(x,y) & 0 \end{array}
\right),\tau(x,y).
\end{equation}
The determining equations for the symmetries of the SDE \refeqn{equation_example21} are
\begin{eqnarray*}
&\partial_x(f)=\frac{1}{2}\tau&\\
&\partial_y(f)=c&\\
&\partial_x(g)=-c&\\
&\partial_y(g)=\frac{1}{2}\tau&\\
&\frac{1}{2}(\partial_{xx}(f)+\partial_{yy}(f))+\frac{x}{x^2+y^2}\partial_{x}(f)+\frac{y}{x^2+y^2}\partial_y(f)-\frac{y^2-x^2}{(x^2+y^2)^2}f+\frac{2xy}{(x^2+y^2)^2}g=\tau \frac{x}{x^2+y^2} &\\
&\frac{1}{2}(\partial_{xx}(g)+\partial_{yy}(g))+\frac{x}{x^2+y^2}\partial_{x}(g)+\frac{y}{x^2+y^2}\partial_y(g)+\frac{2xy}{(x^2+y^2)^2}f-\frac{x^2-y^2}{(x^2+y^2)^2}g=\tau \frac{y}{x^2+y^2},&\\
\end{eqnarray*}
which admit only two linearly independent solutions corresponding to the following weak symmetries $V_1,V_2$
\begin{eqnarray*}
V_1&=&\left(\left(\begin{array}{c}
x\\
y\end{array}
\right),\left(\begin{array}{cc}
0 & 0\\
0 & 0\end{array}
\right), 2\right)\\
V_2&=&\left(\left(\begin{array}{c}
y\\
-x\end{array}
\right),\left(\begin{array}{cc}
0 & 1\\
-1 & 0\end{array}
\right), 0\right).
\end{eqnarray*}
In order to apply  Theorem \ref{theorem_strong_symmetry} ensuring the existence of a stochastic transformation $T$ such that $T_*(V_1),T_*(V_2)$ are two strong symmetries of $E_T(\mu,\sigma)$, we look for  a finite stochastic transformation $T=(id_M,B,\eta)$ such that
\begin{eqnarray}
x \partial_x(B)+y\partial_y(B)&=& 0\label{equation_example2}\\
y\partial_x(B)-x \partial_y(B)&=& -B \cdot \left(\begin{array}{cc}
0 & 1\\
-1 & 0\end{array}
\right) \label{equation_example3}\\
x \partial_x(\eta)+y\partial_y(\eta)&=&-2 \eta \label{equation_example4}\\
y\partial_x(\eta)-x \partial_y(\eta)&=&0.\label{equation_example5}
\end{eqnarray}
We solve the previous equations in detail. Since the vector fields $Y_1=(x,y)^T$ and $Y_2=(y,-x)^T$  satisfy the hypothesis of Theorem \ref{theorem_strong_symmetry} only in $\mathbb{R}^2-\{(0,0)\}$,  we consider $M=\mathbb{R}^2 - \{(0,0)\}$. If we write $B:M \rightarrow SO(2)$ in the form
$$B=\left(\begin{array}{cc}
b_1 & -b_2\\
b_2 & b_1\end{array}
\right),$$
with $b_1,b_2:M \rightarrow \mathbb{R}$ and $b_1^2+b_2^2=1$, equations \refeqn{equation_example2} and \refeqn{equation_example3} become
\begin{eqnarray*}
\left(\begin{array}{cc}
Y_1(b_1) & -Y_1(b_2)\\
Y_1(b_2) & Y_1(b_1)\end{array}
\right)&=&\left(\begin{array}{cc}
0 & 0\\
0 & 0\end{array}
\right)\\
\left(\begin{array}{cc}
Y_2(b_1) & -Y_2(b_2)\\
Y_2(b_2) & Y_2(b_1)\end{array}
\right)&=&\left(\begin{array}{cc}
-b_2 & -b_1\\
b_1 & -b_2\end{array}
\right).
\end{eqnarray*}
The previous equations ensure that $b_1,b_2$ are only functions of $\frac{y}{x}$ and that furthermore
\begin{equation}\label{equation_example6}
Y_2(Y_2(b_1))=-b_1.
\end{equation}
Suppose that $b_1(x,y)=h(\frac{y}{x})$ and introduce $\xi=\frac{y}{x}$. Then  equation \refeqn{equation_example6} can be rewritten as
$$ (\xi^2+1)^2 \partial_{\xi\xi}(h)+(2\xi^3+2\xi)\partial_{\xi}(h)+h=0.$$
and admits the following general solution
$$h(\xi)=\frac{k_1\xi}{\sqrt{\xi^2+1}}+\frac{k_2}{\sqrt{\xi^2+1}},$$
where $k_1,k_2 \in \mathbb{R}$. Choosing $k_2=1,k_1=0$ we have
$$b_1=\frac{x}{\sqrt{x^2+y^2}}$$
and from the relations $b_1^2+b_2^2=1$ we obtain
$$b_2=\frac{y}{\sqrt{x^2+y^2}}$$
so  that the matrix $B$ is
$$B=\left(\begin{array}{cc}
\frac{x}{\sqrt{x^2+y^2}} & -\frac{y}{\sqrt{x^2+y^2}}\\
\frac{y}{\sqrt{x^2+y^2}} & \frac{x}{\sqrt{x^2+y^2}}\end{array}
\right).$$
The computations for $\eta$ are similar. Indeed from  equation \refeqn{equation_example5} we find that $\eta$ must be a function  of $r=\sqrt{x^2+y^2}$, that is $\eta(x,y)=k(r)$ and  equation \refeqn{equation_example4} reads
$$r \partial_{r}(k)=-2 k,$$
whose general solution is
$$k(r)=\frac{k_0}{r^2},$$
with  $k_0 \in \mathbb{R}$.  Choosing  $k_0=1$, we have
$$\eta(x,y)=\frac{1}{x^2+y^2}$$
and we can write  the transformed SDE $E_T(\mu,\sigma)$
\begin{eqnarray*}
E_T(\mu)&=&\left(\begin{array}{c}
x \\
y \end{array}
\right)\\
E_T(\sigma)&=&\left(\begin{array}{cc}
x & y\\
-y & x \end{array} \right).
\end{eqnarray*}
So the new equations
\begin{equation}\label{equation_example1}
\left(\begin{array}{c}
dX_t\\
dY_t \end{array}
\right)=\left(\begin{array}{c}
X_t\\
Y_t \end{array}
\right)dt+\left(\begin{array}{cc}
X_t & Y_t\\
-Y_t & X_t\end{array}
\right)\cdot \left(\begin{array}{c}
dW'^1_t\\
dW'^2_t \end{array}
\right).
\end{equation}
admit the two strong symmetries
\begin{eqnarray*}
T_*(V_1)&=&\left(\left(\begin{array}{c}
x\\
y\end{array}
\right),\left(\begin{array}{cc}
0 & 0\\
0 & 0\end{array}
\right), 0\right)\\
T_*(V_2)&=&\left(\left(\begin{array}{c}
y\\
-x\end{array}
\right),\left(\begin{array}{cc}
0 & 0\\
0 & 0\end{array}
\right), 0\right).
\end{eqnarray*}

\subsection{Two dimensional Brownian motion}\label{2_dimensional_Brownian}

Let us consider the following SDE on $\mathbb{R}^2$

$$\left(\begin{array}{c}
dX_t\\
dY_t \end{array}
\right)=\left(\begin{array}{c}
0\\
0 \end{array}
\right)dt+\left(\begin{array}{cc}
1 & 0\\
0 & 1\end{array}
\right)\cdot \left(\begin{array}{c}
dW^1_t\\
dW^2_t \end{array}
\right)$$
\noindent with $\mu=0$ and $\sigma=I_2$. The solution to the previous equation is obviously the two dimensional Brownian motion.
By Theorem \ref{theorem_infinitesimal_symmetry1}, the infinitesimal transformation $(Y,C,\tau)$  of the form \refeqn{stoch_transf_2dim} is a general symmetry if and only if the following equations hold
\begin{eqnarray*}
&\partial_x(f)=\frac{1}{2}\tau&\\
&\partial_y(f)=c&\\
&\partial_x(g)=-c&\\
&\partial_y(g)=\frac{1}{2}\tau&\\
&\frac{1}{2}(\partial_{xx}(f)+\partial_{yy}(f))=0&\\
&\frac{1}{2}(\partial_{xx}(g)+\partial_{yy}(g))=0.&
\end{eqnarray*}
These equations are satisfied if and only if
\begin{eqnarray}
\partial_x(f)&=&\partial_y(g)\label{equation_brownian1}\\
\partial_y(f)&=&-\partial_x(g)\label{equation_brownian2}\\
c&=&\partial_y(f)\label{equation_brownian3}\\
\tau&=&2\partial_x(f).\label{equation_brownian4}
\end{eqnarray}
Therefore the two dimensional Brownian motion admits an infinite number of general infinitesimal symmetries. Indeed $f,g$ have to satisfy equations \refeqn{equation_brownian1}, \refeqn{equation_brownian2} that are the well-known Cauchy-Riemann equations. It is interesting to remark that the symmetry approach introduced before allows us to recover the expected property for the two dimensional Brownian motion (see \cite{DeLara1}, Example 4.1 and \cite{Liao}, Example 4).

Let us now discuss the problem of determining the general infinitesimal symmetries of the two dimensional Brownian motion generating a one-parameter group of stochastic transformations. Since the functions $f$ and $g$ satisfy the Cauchy-Riemann equations, and we want consider  functions $f,g$   defined on the whole plane $\mathbb{R}^2$, then  the function $u=f+ig$ is an entire function. Denoting  by $z=x+iy$, the vector field $Y$ is the real part of the holomorphic vector field $Z=u(z)\partial_z$ on $\mathbb{C}$ and $Y$ generates a one-parameter group on $\mathbb{R}$ if and only if $Z$ generates a one-parameter group on $\mathbb{C}$. Since $u$ is an entire function, $Z$ generates a one-parameter group if and only if $u(z)$ is a linear function in $z$ and $f,g$ must be linear functions in $x,y$  (see also \cite{Liao}, Example 4). \\
Therefore the general infinitesimal  symmetries of Brownian motion generating a one-parameter group are
\begin{eqnarray*}
V_1&=&\left(\left(\begin{array}{c}
1\\
0\end{array}
\right),\left(\begin{array}{cc}
0 & 0\\
0 & 0\end{array}
\right), 0\right)\\
V_2&=&\left(\left(\begin{array}{c}
0\\
1\end{array}
\right),\left(\begin{array}{cc}
0 & 0\\
0 & 0\end{array}
\right), 0\right)\\
V_3&=&\left(\left(\begin{array}{c}
x\\
y\end{array}
\right),\left(\begin{array}{cc}
0 & 0\\
0 & 0\end{array}
\right), 2\right)\\
V_4&=&\left(\left(\begin{array}{c}
y\\
-x\end{array}
\right),\left(\begin{array}{cc}
0 & 1\\
-1 & 0\end{array}
\right), 0\right).\\
\end{eqnarray*}
The infinitesimal stochastic transformations $V_1$ and $V_2$ are the $x$ and $y$ translation respectively, $V_3$ is the dilatation and $V_4$ is the rotation around the coordinate origin. It is important to note that, although for $V_1,V_2,V_3$ the matrix $C$  is $0$, in the case of the transformation $V_4$ we have $C\not =0$. This circumstance shows that  the introduction of the anti-symmetric matrix $C$ is necessary in order to have the rotation as a symmetry for the two dimensional Brownian motion.

\end{document}